\newtheorem{theorem}{\textbf{Theorem}}[section]
\newtheorem{corollary}[theorem]{\textbf{Corollary}}
\newtheorem{proposition}[theorem]{\textbf{Proposition}}
\newtheorem*{theo-intro}{\textbf{Theorem}}
\theoremstyle{definition}
\newtheorem{definition}[theorem]{\textbf{Definition}}
\theoremstyle{remark}
\newtheorem{remark}[theorem]{\textbf{Remark}}
\newtheorem{remarks}[theorem]{\textbf{Remarks}}
\newtheorem{example}[theorem]{\textbf{Example}}
\newcounter{pic}\setcounter{pic}{0}
\numberwithin{equation}{section}
\DeclareRobustCommand\mmodels{\Relbar\joinrel\mathrel{|}}
\def\C{\mathbb{C}}
\def\Z{\mathbb{Z}}
\def\hY{\widehat{Y}}
\def\hH{\widehat{H}}
\def\og{\overline{g}}
\def\oX{\overline{X}}
\def\tX{\widetilde{X}}
\newcommand{\Comp}{\operatorname{Comp}}
\newcommand{\Soc}{\operatorname{Soc}} 
\newcommand{\Irr}{\operatorname{Irr}} 
\newcommand{\Mat}{\operatorname{Mat}}
\newcommand{\bun}{\boldsymbol{1}}
\newcommand{\bv}{\boldsymbol{v}}
\newcommand{\btau}{\boldsymbol{\tau}}
\def\hB{B^{\text{aff}}}
\newcommand{\bx}{\boldsymbol{x}}
\def\trho{\widetilde{\rho}}
\def\tP{\widetilde{P}}
\begin{document}

\title{Invariants for links from classical and affine Yokonuma--Hecke algebras}

\author{L. Poulain d'Andecy}
\email{loic.poulain-dandecy@univ-reims.fr}
\address{Universit\'e de Reims Champagne-Ardenne, UFR Sciences exactes et naturelles, Laboratoire de Math\'ematiques EA 4535 Moulin de la Housse BP 1039, 51100 Reims, France}

\classification{57M25, 57M27, 20C08, 20F36}

%\keywords{}

\date{January, 2016}

\thanks{It is a pleasure to thank the organisers of the Thales workshop held in Athens in July 2015 and especially Sofia Lambropoulou for her interest in this work.}

\begin{abstract}
We present a construction of invariants for links using an isomorphism theorem for affine Yokonuma--Hecke algebras. The isomorphism relates affine Yokonuma--Hecke algebras with usual affine Hecke algebras. We use it to construct a large class of Markov traces on affine Yokonuma--Hecke algebras, and in turn, to produce invariants for links in the solid torus. By restriction, this construction contains the construction of invariants for classical links from classical Yokonuma--Hecke algebras. In general, the obtained invariants form an infinite family of 3-variables polynomials. As a consequence of the construction via the isomorphism, we reduce the number of invariants to study, given the number of connected components of a link. In particular, if the link is a classical link with $N$ components, we show that $N$ invariants generate the whole family.
\end{abstract}

\maketitle

\section{Introduction}

\textbf{1.} The Yokonuma--Hecke algebras (of type GL), denoted $Y_{d,n}$, have been used by J. Juyumaya and S. Lambropoulou to construct invariants for various types of links, in the same spirit as the construction of the HOMFLYPT polynomial from usual Hecke algebras. We refer to \cite{ChJKL} and references therein. In particular, the algebras $Y_{d,n}$ provide invariants for classical links and the natural question was to decide if these invariants were equivalent, or not, to the HOMFLYPT polynomial. This study culminated in the recent discovery \cite{ChJKL} that these invariants are actually topologically stronger than the HOMFLYPT polynomial (\emph{i.e.} they distinguish more links).

In \cite{JaPo}, another approach to study invariants coming from Yokonuma--Hecke algebras was developed. The starting point was the fact that the algebra $Y_{d,n}$ is isomorphic to a direct sum of matrix algebras with coefficients in tensor products of usual Hecke algebras. This allowed an explicit construction of Markov traces on $\{Y_{d,n}\}_{n\geq1}$ from the known Markov trace on Hecke algebras (on Hecke algebras, there is a unique Markov trace up to normalisation, and it gives the HOMFLYPT polynomial). In addition to its usefulness for the construction of Markov traces, the approach via the isomorphism also helps to study the resulting invariants. Indeed some properties of the invariants follow quite immediately from a precise understanding of the isomorphism (see paragraph \textbf{4} below).

Independently of which approach is used, another ingredient was added in \cite{JaPo}: a third parameter in the invariants. While the first two parameters come from the algebra $Y_{d,n}$, this third parameter $\gamma$ has its origin in the framed braid group, and corresponds to a certain degree of freedom one has when going from the framed braid group to the algebra $Y_{d,n}$. More precisely, we can deform the standard surjective morphism from the framed braid group algebra to its quotient $Y_{d,n}$ into a family of morphisms (depending on $\gamma$) respecting the braid relations and the Markov conditions. Another way of interpreting the parameter $\gamma$ is that it modifies the quadratic relation satisfied by the generators of $Y_{d,n}$. Its existence explains (or is reflected in) the fact that different presentations for $Y_{d,n}$ were used before. Juyumaya--Lambropoulou invariants correspond to certain specialisations of this parameter $\gamma$, depending on the chosen presentation. So the parameter $\gamma$ unifies every possible choices and yields more general invariants. It is indicated in \cite[Remark 8.5]{ChJKL} that changing the presentation seems to give a non-equivalent topological invariant.

\vskip .2cm
\textbf{2.} In this paper, we consider the affine Yokonuma--Hecke algebras (of type GL), denoted $\hY_{d,n}$. They were introduced in \cite{ChPo} in connections with the representation theory and the Jucys--Murphy elements of the classical Yokonuma--Hecke algebras. Our main goal here is to generalise for $\hY_{d,n}$ the whole approach to link invariants via the isomorphism theorem. The invariants are in general for links in the solid torus. The classical links are naturally contained in the solid torus links and, restricted to them, the obtained invariants correspond to the invariants obtained in \cite{JaPo} from $Y_{d,n}$ (naturally seen as a subalgebra of $\hY_{d,n}$). Specialising the parameter $\gamma$, we identify the Juyumaya--Lambropoulou invariants among them. For those invariants, we emphasize that we recover some known results \cite{ChJKL} by a different method and furthermore obtain some new results already in this particular case.

We start with an isomorphism between the algebra $\hY_{d,n}$ and a direct sum of matrix algebras with coefficients in tensor products of affine Hecke algebras. As done in \cite{Cu}, the isomorphism can be proved repeating the same arguments as for $Y_{d,n}$ (see \cite{JaPo} where the proof for $Y_{d,n}$ is presented, as a particular case of a more general result by G. Lusztig \cite[\S 34]{Lu}). Here we sketch a short different proof for $\hY_{d,n}$ using the known result for $Y_{d,n}$. We also prove the analogous theorem for the cyclotomic quotients of $\hY_{d,n}$ (with Ariki--Koike algebras replacing affine Hecke algebras). Useful for concrete use, the formulas for the generators are simple and given explicitly.

Concerning links, S. Lambropoulou constructed invariants, analogues of the HOMFLYPT polynomial, for links in the solid torus from affine Hecke algebras \cite{La2}. Then, it was explained in  \cite{ChPo2} how to obtain invariants for those links from the algebras $\hY_{d,n}$, unifying the methods of J. Juyumaya and S. Lambropoulou for $Y_{d,n}$ and the construction of S. Lambropoulou for affine Hecke algebras. Due to the recent results of \cite{ChJKL}, it is expected that the invariants obtained from $\hY_{d,n}$ are stronger than the ones obtained from affine Hecke algebras.

Here we follow the alternative approach which uses the isomorphism to construct Markov traces on the family of algebras $\{\hY_{d,n}\}_{n\geq1}$. To sum up, the Markov traces are constructed and can be calculated with the following steps: for an element of $\hY_{d,n}$, apply first the isomorphic map to obtain an element of the direct sum of matrix algebras; then, for each matrix, apply the usual trace which results in an element of a tensor product of affine Hecke algebras; finally apply a tensor product of Markov traces on affine Hecke algebras. Our result consists in obtaining the compatibility conditions relating the Markov traces appearing in different matrix algebras so that the preceding procedure eventually results in a genuine Markov trace on $\{\hY_{d,n}\}_{n\geq1}$.

With the definition used here, for a given $d>0$, the set of Markov traces on $\{\hY_{d,n}\}_{n\geq1}$ forms a vector space. From the isomorphism, a set of distinguished Markov traces appears naturally, which spans the set of all Markov traces constructed here. Thus, our study of Markov traces (and of invariants) is reduced to the study of these ``basic" Markov traces (and of the corresponding ``basic"  invariants). It turns out that these basic Markov traces are indexed, for a given $d>0$, by the non-empty subsets $S\subset\{1,\dots,d\}$ together with a choice, denoted formally by $\btau$, of $|S|$ arbitrary Markov traces on affine Hecke algebras. We note that if we restrict to $Y_{d,n}$, the parameter $\btau$ disappears and the basic Markov traces on $\{Y_{d,n}\}_{n\geq1}$ are indexed, for a given $d>0$, only by the non-empty subsets $S\subset\{1,\dots,d\}$. This recovers a result of \cite{JaPo}.

\vskip .2cm
\textbf{3.} Throughout the paper, we intended to give in details the connections between the two approaches, so that one would be able to pass easily from one to the other. This will allow in particular to specialise and translate all our results on the invariants to Juyumaya--Lambropoulou invariants as well.

Roughly speaking, J. Juyumaya and S. Lambropoulou constructed invariants from $Y_{d,n}$ in two steps \cite{JuLa2}. The same approach was followed in \cite{ChPo2} for $\hY_{d,n}$. First a certain trace map, analogous to the Ocneanu trace and satisfying a certain positive Markov condition, was constructed. Then a rescaling procedure was implemented, in order to produce genuine invariants. The rescaling procedure amounts to two things: a renormalisation of the generators and a renormalisation, depending on $n$, of the trace. In the approach presented here, the first step is included from the beginning in a more general quadratic relation for the generators. The second step is already included in the definition of a Markov trace, namely that it is a family, on $n$, of trace maps satisfying the two Markov conditions. As a consequence, to obtain invariants here, one directly applies the Markov trace  and no rescaling procedure is needed.

For the comparison, our first task is to explain that Juyumaya--Lambropoulou approach is equivalent to considering certain Markov traces (with the definition used here) and to relate their variables with the parameters considered here. Then we need to identify these Markov traces in terms of the ones constructed via the isomorphism theorem. We obtain finally the explicit decomposition of 
these Markov traces in terms of the basic Markov traces indexed by $S\subset\{1,\dots,d\}$ and $\btau$ as above.

In particular, for $Y_{d,n}$, this results in an explicit formula for the Juyumaya--Lambropoulou invariants, as studied in \cite{ChJKL}, in terms of the basic invariants constructed here. We note that, in this case, the parameter $\btau$ is not present, and that Juyumaya--Lambropoulou invariants are also parametrised, for a given $d>0$, by non-empty subsets of $\{1,\dots,d\}$. Nevertheless, they do not coincide with the basic invariants and the comparison formula is not trivial (see Formulas (\ref{compare2}) in Section \ref{Sec-inv}). In general, for $\hY_{d,n}$, we obtain  the expression of the invariants constructed in \cite{ChPo2} in terms of the basic invariants constructed here.

Concerning the third parameter $\gamma$, we recall that it was not present in the previous approach. Actually, one need to specialise it to a certain value in our invariants to recover the Juyumaya--Lambropoulou invariants. The two different presentations of $Y_{d,n}$ that were used, as in \cite{ChJKL}, correspond to two different values of $\gamma$ that we give explicitly. Similarly, for $\hY_{d,n}$, the invariants constructed in \cite{ChPo2} correspond to a certain specialisation of $\gamma$.

\vskip .2cm
\textbf{4.} We conclude this introduction by describing the main properties obtained for the invariants. As explained before, they follow quite directly from a precise understanding of the isomorphism, and are expressed easily in terms of the basic invariants defined here. The main results are:
\begin{itemize}
\item for $d>0$ and a non-empty subset $S\subset\{1,\dots,d\}$, the corresponding invariants coincide with invariants corresponding to $d'=|S|$ and the full set $\{1,\dots,d'\}$. Therefore, we only have to consider the full sets $\{1,\dots,d\}$ for different $d>0$.
\item further, given a number $N$ of connected components of a link, the invariants corresponding to $\{1,\dots,d\}$ are zero if $d>N$. So, given $N$, we only have to consider $d=1,\dots,N$.
\end{itemize}
Moreover, with the comparison results explained in paragraph \textbf{3}, it is easy to deduce the similar properties for invariants obtained via Juyumaya--Lambropoulou approach. The first item remains true as it is. The second item results in an explicit formula expressing, if $d>N$, the invariants corresponding to $\{1,\dots,d\}$ in terms of the invariants corresponding to $\{1,\dots,d'\}$ with $d'\leq N$. Specialising $\gamma $ to the appropriate values and restricting to classical links, we recover with the first item a result of \cite{ChJKL}. The second item in this case was proved only for $N\leq2$ also in \cite{ChJKL}.

\section{Affine Yokonuma--Hecke algebras}\label{sec-def}

Let  $d,n \in\Z_{>0}$ and $u$ and $v$ be indeterminates. We work over the ring $\C[u^{\pm1},v]$.
The  properties of the affine Yokonuma--Hecke algebras recalled here can be found in \cite{ChPo2}.

\subsection{Definitions}

We use $\mathfrak{S}_n$ to denote the symmetric group on $n$ elements, and $s_i$ to denote the transposition $(i,i+1)$. The affine Yokonuma--Hecke algebra $\hY_{d,n}$ is generated by elements
$$g_1,\ldots,g_{n-1},X_1^{\pm1},t_1,\ldots,t_n,$$
subject to the following defining relations (\ref{def-aff1})--(\ref{def-aff3}):
\begin{equation}\label{def-aff1}
\begin{array}{rclcl}
g_ig_j & = & g_jg_i &\quad & \mbox{for $i,j=1,\ldots,n-1$ such that $\vert i-j\vert > 1$,}\\[0.1em]
g_ig_{i+1}g_i & = & g_{i+1}g_ig_{i+1} && \mbox{for $i=1,\ldots,n-2$,}\\[0.1em]
X_1\,g_1X_1g_1 & =  & g_1X_1g_1\,X_1  &&\\[0.1em]
X_1g_i & = & g_iX_1 && \mbox{for $i=2,\ldots,n-1$,}\\[0.1em]
\end{array}
\end{equation}
\begin{equation}\label{def-aff2}
\hspace{-1.2cm}\begin{array}{rclcl}
t_it_j & =  & t_jt_i &\qquad&  \mbox{for $i,j=1,\ldots,n$,}\\[0.1em]
g_it_j & = & t_{s_i(j)}g_i && \mbox{for $i=1,\ldots,n-1$ and $j=1,\ldots,n$,}\\[0.1em]
t_j^d   & =  &  1 && \mbox{for $j=1,\ldots,n$,}\\[0.2em]
X_1t_j & = & t_jX_1 && \mbox{for $j=1,\ldots,n$,}
\end{array}
\end{equation}
\begin{equation}\label{def-aff3}
\hspace{-3.2cm}
\begin{array}{rclcl}
g_i^2  & = & u^2 + v \, e_{i} \, g_i &\quad& \mbox{for $i=1,\ldots,n-1$,}
\end{array}
\end{equation}
where $e_i :=\displaystyle\frac{1}{d}\sum_{1\leq s\leq d}t_i^s t_{i+1}^{-s}$. The elements $e_i$ are idempotents and we have:
\begin{equation}
g_i^{-1} = u^{-2}g_i - u^{-2}v\, e_i  \qquad \mbox{for all $i=1,\ldots,n-1$}.
\end{equation}

Let $w \in \mathfrak{S}_n$ and let $w=s_{i_1}s_{i_2}\ldots s_{i_r}$ be a reduced expression for $w$.
Since the generators $g_i$ of $\hY_{d,n}$ satisfy the same braid relations as the generators $s_i$ of $\mathfrak{S}_n$, Matsumoto's lemma implies that the following element does not depend on the reduced expression of $w$:
\begin{equation}\label{def-gw}g_w:=g_{i_1}g_{i_2}\ldots g_{i_r}\ .
\end{equation} 

Elements $X_2,\ldots,X_n$ of $\hY_{d,n}$ are defined inductively by
\begin{equation}\label{rec-X}
X_{i+1}:=u^{-2}g_iX_ig_i\ \ \ \ \text{for $i=1,\ldots,n-1$.}
\end{equation}
The elements $X_1,\dots,X_n$ commute with each other. They also commute with the generators $t_1,\dots,t_n$ and they satisfy
$g_jX_i=X_ig_j$ if $i\neq j,j+1$.

For $\lambda=(\lambda_1,\dots,\lambda_n)\in\Z^n$, we set $X^{\lambda}:=X_1^{\lambda_1}\dots X_n^{\lambda_n}$.
The following set of elements forms a basis of $\hY_{d,n}$:
\begin{equation}\label{basis}
\{\ t_1^{a_1}\dots t_n^{a_n}X^{\lambda}g_w\ |\ a_1,\dots,a_n\in\{1,\dots,d\}\,,\ \lambda\in\Z^n\,,\ w\in\mathfrak{S}_n\ \}\ .
\end{equation}
This fact has the following consequences:

$\bullet$ Recall that the Yokonuma--Hecke algebra $Y_{d,n}$ is presented by generators $g_1,\dots,g_{n-1}$, $t_1,\dots,t_n$ and defining relations those in (\ref{def-aff1})--(\ref{def-aff3}) which do not involve the generator $X_1$. We have that $Y_{d,n}$ is isomorphic to the subalgebra of $\hY_{d,n}$ generated by $g_1,\dots,g_{n-1},t_1,\dots,t_n$ (hence the common names for the generators).

$\bullet$ In particular, the commutative subalgebra $\mathcal{T}_{d,n}:=\langle t_1,\ldots, t_n\rangle$ of $\hY_{d,n}$ generated by $t_1,\dots,t_n$ is isomorphic to the group algebra of $(\mathbb{Z}/d\mathbb{Z})^n$.

$\bullet$ By definition, the affine Hecke algebra (of type GL) is $\hH_n:=\hY_{1,n}$. We have, for any $d>0$, that the quotient of $\hY_{d,n}$ by the relations $t_j=1$, $j=1,\dots,n$, is isomorphic to $\hH_n$. We denote by $\overline{\,\cdot\,}$ the corresponding surjective morphism from $\hY_{d,n}$ to $\hH_n$, and the generators of $\hH_n$ are denoted $\og_1,\dots,\og_{n-1},\oX_1^{\pm1}$.

$\bullet$ The subalgebra of $\hH_n$ generated by $\og_1,\dots,\og_{n-1}$ is the usual Hecke algebra, denoted $H_n$. We also have $H_n=Y_{1,n}$.

\subsection{Compositions of $n$}

Let $\operatorname{Comp}_d (n)$ be the set of {\it $d$-compositions} of $n$, that is the set of $d$-tuples $\mu=(\mu_1,\ldots,\mu_d)\in\Z_{\geq0}^d$ such that $\sum_{1\leq a\leq d} \mu_a =n$. We denote $\mu\mmodels_d n$.

For $\mu\mmodels_d n$, the Young subgroup $\mathfrak{S}^{\mu}$ is the subgroup $\mathfrak{S}_{\mu_1}\times\dots\times\mathfrak{S}_{\mu_d}$ of  $\mathfrak{S}_{n}$, where $\mathfrak{S}_{\mu_1}$ acts on the letters $\{1,\dots,\mu_1\}$, $\mathfrak{S}_{\mu_2}$ acts on the letters $\{\mu_1+1,\dots,\mu_2\}$, and so on. The subgroup $\mathfrak{S}^{\mu}$ is generated by the transpositions $s_i$ with $i\in I_{\mu}:=\{1,\ldots,n-1\}\setminus \{\mu_1,\mu_1+\mu_2,\ldots, \mu_{1}+\ldots +\mu_{d-1}\}$.

We denote by $\hH^{\mu}$ the algebra $\hH_{\mu_1} \otimes \ldots \otimes \hH_{\mu_d}$ (by convention $\hH_0:=\C[u^{\pm 1},v]$). It is isomorphic to the subalgebra of $\hH_n$ generated by $\oX_1^{\pm1},\dots,\oX_n^{\pm1}$ and $\og_i$, with $i\in I_{\mu}$, and is a free submodule with basis $\{\oX^{\lambda}\og_w \ |\ \lambda\in\Z^n\,,\ w\in \mathfrak{S}^{\mu}\}$.

Similarly, we have a subalgebra $H^{\mu}\cong H_{\mu_1} \otimes \ldots \otimes H_{\mu_d}$ of the Hecke algebra $H_n$. It is naturally a subalgebra of $\hH^{\mu}$ (generated only by $\og_i$, with $i\in I_{\mu}$).

For $\mu\mmodels_d n$, let $m_{\mu}$ be the index of the Young subgroup $\mathfrak{S}^{\mu}$ in $\mathfrak{S}_n$, that is,
\begin{equation}\label{mmu}
m_{\mu}:=\displaystyle \frac{n!}{\mu_1!\mu_2!\dots\mu_d!}\ .
\end{equation}
We define the \emph{socle} $\overline{\mu}$ of a $d$-composition $\mu$ by
 \begin{equation}\label{def-base-mu}
\overline{\mu}_a=\left\{\begin{array}{ll}
1 & \text{if $\mu_a\geq1$,}\\[0.2em]
0 &  \text{if $\mu_a=0$,}
\end{array}\right.\ \ \ \ \ \ \text{for $a=1,\dots,d$.}
\end{equation}
The composition $\overline{\mu}$ belongs to $\Comp_d(N)$ where $N$ is the number of non-zero parts in $\mu$. We denote by $\Soc_d$ the set of all socles of $d$-compositions, or in other words, $\Soc_d$ is the set of $d$-compositions whose parts belong to $\{0,1\}$. We note that there is a one-to-one correspondence between the set $\Soc_d$ and the set of non-empty subsets of $\{1,\dots,d\}$, given by
 \begin{equation}\label{bij-Soc}
\{1,\dots,d\}\supset S\ \ \longleftrightarrow\ \  \mu^S\in\Soc_d\,,\ \ \text{where}\ \  \mu^S_a=\left\{\begin{array}{ll}
1 & \text{if $a\in S$,}\\[0.2em]
0 &  \text{if $a\notin S$.}
\end{array}\right.
\end{equation}

\subsection{characters of $\mathcal{T}_{d,n}$}
 
Let $\{\xi_1,\dots,\xi_d\}$ be the set of roots of unity of order $d$. A complex character $\chi$ of the group $(\mathbb{Z}/d\mathbb{Z})^n$ is characterised by the choice of $\chi (t_j)\in \{\xi_1,\ldots ,\xi_d  \}$ for each $j=1,\ldots,n$. We denote by $\Irr(\mathcal{T}_{d,n})$ the set of complex characters of $(\mathbb{Z}/d\mathbb{Z})^n$, extended to the subalgebra $\mathcal{T}_{d,n}=\langle t_1,\dots,t_n\rangle$ of $\hY_{d,n}$.

For each $\chi \in \Irr \left(\mathcal{T}_{d,n}\right)$, we denote by $E_{\chi}$ the primitive idempotent of 
 $\mathcal{T}_{d,n}$ associated to $\chi$. Then the set $\{ E_{\chi}  \ |\ \chi \in  \Irr (\mathcal{T}_{d,n})\}$ is a basis of $\mathcal{T}_{d,n}$. Therefore, from the basis (\ref{basis}) of $\hY_{d,n}$, we obtain the following other basis of $\hY_{d,n}$: 
\begin{equation}\label{E-basis}
\{ E_{\chi} X^{\lambda} g_w \ |\ \chi \in  \Irr (\mathcal{T}_{d,n})\,,\ \lambda\in\Z^n\,,\ w\in \mathfrak{S}_n\}\ .
\end{equation}

\paragraph{\textbf{Permutations $\pi_{\chi}$.}} Let $\chi\in\Irr (\mathcal{T}_{d,n})$. For $a\in\{1,\ldots,d\}$, we let $\mu_a$ be the number of elements  $j\in \{1,\ldots,n\}$ such that $\chi (t_j)=\xi_a$. Then the sequence $(\mu_1,\ldots,\mu_d)$ is a $d$-composition of $n$ which we denote by 
$\Comp(\chi)$.

For a given $\mu\mmodels_d n$, we consider a particular character $\chi_0^{\mu}\in\Irr(\mathcal{T}_{d,n})$ such that $\Comp(\chi_0^{\mu})=\mu$.  The character $\chi_0^{\mu}$ is defined by
\begin{equation}\label{chi0-mu}
\left\{\begin{array}{ccccccc}
\chi_0^{\mu} (t_1)&=&\ldots & =& \chi_0^{\mu} (t_{\mu_1})&=& \xi_1\ ,\\[0.2em]
\chi_0^{\mu} (t_{\mu_1+1})&=&\ldots & =& \chi_0^{\mu} (t_{\mu_1+\mu_2})&=& \xi_2\ ,\\
\vdots &\vdots &\vdots &\vdots &\vdots &\vdots &\vdots  \\
\chi_0^{\mu} (t_{\mu_1+\dots+\mu_{d-1}+1})&=&\ldots & =& \chi_0^{\mu} (t_{n})&=& \xi_d\ .\\
\end{array}\right.
\end{equation}
The symmetric group  $\mathfrak{S}_n$ acts on the set $\Irr (\mathcal{T}_{d,n})$ by the formula $w(\chi)\bigl(t_i\bigr)=\chi (t_{w^{-1} (i)})$. The stabilizer of $\chi_0^{\mu}$ under the action of $\mathfrak{S}_n$ is the Young subgroup $\mathfrak{S}^{\mu}$. In each left coset in $\mathfrak{S}_n/\mathfrak{S}^{\mu}$, there is a unique representative of minimal length. So, for any $\chi\in\Irr(\mathcal{T}_{d,n})$ such that $\Comp(\chi)=\mu$, we define a permutation $\pi_{\chi}\in\mathfrak{S}_n$ by requiring that $\pi_{\chi}$ is the element of minimal length such that:
\begin{equation}\label{def-pi}
\pi_{\chi}(\chi_0^{\mu})=\chi\ .
\end{equation}

\section{Isomorphism Theorems}

We present isomorphism theorems for the algebras $\hY_{d,n}$ and their cyclotomic quotients. We sketch a short proof, which uses the corresponding result for $Y_{d,n}$ (see \cite[Section 3.1]{JaPo}). We are still working over $\C[u^{\pm1},v]$.

\subsection{Isomorphism theorem for affine Yokonuma--Hecke algebras}

For $\mu\mmodels_d n$, we consider the algebra $\Mat_{m_{\mu}}(\hH^{\mu})$ of matrices of size $m_{\mu}$ with coefficients in $\hH^{\mu}$. We recall that $m_{\mu}$, given by (\ref{mmu}), is the number of characters $\chi\in\Irr(\mathcal{T}_{d,n})$ such that $\Comp(\chi)=\mu$. So we index the rows and columns of a matrix in $\Mat_{m_{\mu}}(\hH^{\mu})$ by such characters. Moreover, for two characters $\chi,\chi'$ such that $\Comp(\chi)=\Comp(\chi')=\mu$, we denote by $\bun_{\chi,\chi'}$ the  matrix in $\Mat_{m_{\mu}}(\hH^{\mu})$ with 1 in line $\chi$ and column $\chi'$, and 0 everywhere else.
\begin{theorem}\label{theo-iso}
The affine Yokonuma--Hecke algebra $\hY_{d,n}$ is isomorphic to $\bigoplus_{\mu\mmodels_d\,n}\Mat_{m_{\mu}}(\hH^{\mu})$, the isomorphism being given on the elements of the basis (\ref{E-basis}) by
\begin{equation}\label{iso-aff}
\Psi_{d,n}\ \ :\ \ E_{\chi}X^{\lambda}g_{w^{-1}}\ \longmapsto\ u^{\ell(w^{-1})-\ell(\pi_{\chi}^{-1}w^{-1}\pi_{w(\chi)})}\,\bun_{\chi,w(\chi)}\,\oX^{\pi^{-1}_{\chi}(\lambda)}\og_{\pi_{\chi}^{-1}w^{-1}\pi_{w(\chi)}}\ ,
\end{equation}
where $\chi\in\Irr(\mathcal{T}_{d,n})$, $\lambda\in\Z^n$ and $w\in\mathfrak{S}_n$ ($\ell$ is the length function on $\mathfrak{S}_n$).
\end{theorem}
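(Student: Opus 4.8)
The plan is to derive this isomorphism from the known result for $Y_{d,n}$ (see \cite[Section 3.1]{JaPo}) rather than redoing Lusztig's argument. Recall that $Y_{d,n}$ is the subalgebra of $\hY_{d,n}$ generated by $g_1,\dots,g_{n-1},t_1,\dots,t_n$, and that the isomorphism theorem for $Y_{d,n}$ gives $\Psi_{d,n}^{Y}\colon Y_{d,n}\xrightarrow{\sim}\bigoplus_{\mu\mmodels_d n}\Mat_{m_\mu}(H^\mu)$ by the formula obtained from (\ref{iso-aff}) by deleting the $X^\lambda$ and $\oX^{\pi^{-1}_\chi(\lambda)}$ factors. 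The first step is to check that the map $\Psi_{d,n}$ defined on the basis (\ref{E-basis}) is well defined, i.e. that it respects the defining relations. For the relations not involving $X_1$ this is immediate, since on the subalgebra $Y_{d,n}$ the map $\Psi_{d,n}$ restricts to $\Psi_{d,n}^{Y}$, which is already known to be an algebra homomorphism; one just has to observe that $\hH^\mu\supset H^\mu$ compatibly, so the target of $\Psi_{d,n}^Y$ sits inside the target of $\Psi_{d,n}$.

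The substantive content is therefore to verify the relations involving $X_1$: the affine braid relation $X_1g_1X_1g_1=g_1X_1g_1X_1$, the commutations $X_1g_i=g_iX_1$ for $i\geq2$, and $X_1t_j=t_jX_1$. The clean way to organize this is to first compute $\Psi_{d,n}(E_\chi X^\lambda)$ and $\Psi_{d,n}(E_\chi g_{w^{-1}})$ separately from (\ref{iso-aff}), and then establish a product formula showing that $\Psi_{d,n}(E_\chi X^\lambda g_{w^{-1}})=\Psi_{d,n}(E_\chi X^\lambda)\cdot\Psi_{d,n}(E_\chi g_{w^{-1}})$ is consistent with the multiplication in $\bigoplus_\mu\Mat_{m_\mu}(\hH^\mu)$. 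Here the key identities are bookkeeping facts about the minimal-length coset representatives $\pi_\chi$ and the length cocycle $\ell(w^{-1})-\ell(\pi_\chi^{-1}w^{-1}\pi_{w(\chi)})$: one needs that this exponent is always nonnegative (so that only positive powers of $u$ appear), that $\pi_\chi^{-1}w^{-1}\pi_{w(\chi)}$ indeed lies in the Young subgroup $\mathfrak{S}^\mu$, and that the exponents add correctly under composition $w'\mapsto w'w$. These are exactly the combinatorial lemmas already used in the $Y_{d,n}$ case; I would either quote them from \cite{JaPo} or reprove the one extra point needed, namely that the $X$-part transforms by $\oX^\lambda\mapsto\oX^{\pi^{-1}_\chi(\lambda)}$ in a way compatible with the action of $g_w$ on the $X_i$'s via the rule $g_jX_i=X_ig_j$ for $i\neq j,j+1$ together with (\ref{rec-X}).

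Concretely, for $X_1g_i=g_iX_i$-type relations one uses that conjugation by $g_w$ permutes $X_1,\dots,X_n$ according to $w$ (up to the Hecke corrections that vanish on the relevant indices), matched on the other side by the fact that $\pi_{s_i(\chi)}$ differs from $\pi_\chi$ by right multiplication by an element of $\mathfrak{S}^\mu$ or by $s_i\pi_\chi$ according to whether $i$ is or is not an ascent; this is precisely where the permutation $\pi_\chi^{-1}(\lambda)$ in the exponent of $\oX$ does its job. The affine braid relation $X_1g_1X_1g_1=g_1X_1g_1X_1$ reduces, after applying $\Psi_{d,n}$, to the corresponding affine braid relation in $\hH^\mu$ (when $\mu_1\geq1$ so that $\og_1\in\hH^\mu$) or to a trivial identity among the $\oX_i$'s (when the relevant character forces the two strands into different tensor factors, so $g_1$ maps to a permutation matrix with $\oX$-entries). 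Once $\Psi_{d,n}$ is known to be a well-defined algebra homomorphism, surjectivity and injectivity follow by a dimension/rank count: the basis (\ref{E-basis}) has $d^n\cdot|\Z^n|\cdot n!$ ``elements per $\lambda$'', and matching it against the obvious basis $\{\bun_{\chi,\chi'}\oX^\lambda\og_w\}$ of the right-hand side (using the stated basis of $\hH^\mu$ and $m_\mu^2\cdot m_\mu^{-1}\cdot|\mathfrak{S}^\mu|\cdots$ — more precisely, $\sum_\mu m_\mu^2|\mathfrak{S}^\mu|=\sum_\mu m_\mu n!\cdot\#\{\text{cosets}\}$ collapses to $d^n n!$) shows the two free $\C[u^{\pm1},v]\otimes\C[\Z^n]$-modules have equal rank, and $\Psi_{d,n}$ sends a basis to a basis.

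I expect the main obstacle to be purely organizational rather than deep: assembling the length-function identities for the cocycle $w\mapsto\ell(w)-\ell(\pi_\chi^{-1}w\pi_{w(\chi)})$ and checking they interact correctly with the $X$-exponent substitution $\lambda\mapsto\pi_\chi^{-1}(\lambda)$ when multiplying two basis elements whose $\mathfrak{S}_n$-parts lie in different cosets. Since \cite{JaPo} already handles all of this for the $g$-and-$t$ part, the honest new work is just tracking the affine generators through, which is why a ``short proof'' is feasible; the write-up should isolate a single lemma on the multiplication rule for $\Psi_{d,n}$ on basis elements and then deduce everything.
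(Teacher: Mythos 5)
Your proposal is correct and follows essentially the same route as the paper: it leans on the known isomorphism for $Y_{d,n}$ for all relations not involving $X_1$, verifies the affine relations and the compatibility of the basis formula (\ref{iso-aff}) with the generator images, and then deduces bijectivity. The only cosmetic difference is the last step, where you argue that $\Psi_{d,n}$ sends the basis (\ref{E-basis}) to unit multiples of the natural basis $\{\bun_{\chi,\chi'}\oX^{\lambda}\og_{w}\}$ (the rank count alone would not suffice), while the paper checks injectivity directly from the fact that $(\chi,\lambda,w)\mapsto\bigl(\chi,w(\chi),\pi_{\chi}^{-1}(\lambda),\pi_{\chi}^{-1}w^{-1}\pi_{w(\chi)}\bigr)$ is one-to-one and gets surjectivity from Formula (\ref{form-X}) together with the $Y_{d,n}$ case; these amount to the same combinatorial observation.
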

\begin{proof}[Sketch of a proof] We start with explicit formulas for the images of the generators of $\hY_{d,n}$ given below in (\ref{form-t})--(\ref{form-g}), and we check that the images of the generators satisfy all the defining relations (\ref{def-aff1})--(\ref{def-aff3}) of $\hY_{d,n}$. For the relations not involving the generator $X_1$, this is already known from the isomorphism theorem for $Y_{d,n}$. We omit the remaining straightforward verifications.

Thus, Formulas (\ref{form-t})--(\ref{form-g}) induce a morphism of algebras, and we check that it coincides with $\Psi_{d,n}$ given by (\ref{iso-aff}). Again, for the images of the elements of the basis of the form $E_{\chi}g_{w^{-1}}$, this is already known from the $Y_{d,n}$ situation. The multiplication by $X^{\lambda}$ is straightforward.

It remains to check that $\Psi_{d,n}$ is bijective. The surjectivity follows from a direct inspection of Formula (\ref{form-X}) together with the already known fact that every $\Mat_{m_{\mu}}(H^{\mu})$ is in the image of $\Psi_{d,n}$. The injectivity can be checked directly. Indeed, assume that a certain linear combination $\sum_{\chi,\lambda,w}c_{\chi,\lambda,w}E_{\chi}X^{\lambda}g_{w^{-1}}$ is in the kernel of $\Psi_{d,n}$. Then for every $\chi,\lambda,w$, we obtain that
\[\sum_{w'}u^{\ell(w'^{-1})-\ell(\pi_{\chi}^{-1}w'^{-1}\pi_{w'(\chi)})}c_{\chi,\lambda,w'}\,\og_{\pi_{\chi}^{-1}w'^{-1}\pi_{w'(\chi)}}=0\ ,\]
where the sum is over $w'\in\mathfrak{S}_n$ such that $w'(\chi)=w(\chi)$. For $w',w''$ satisfying this condition, we have $\pi_{\chi}^{-1}w'^{-1}\pi_{w'(\chi)}=\pi_{\chi}^{-1}w''^{-1}\pi_{w''(\chi)}$ if and only if $w'=w''$, and therefore every coefficients in the above sum are $0$.
\end{proof}

\paragraph{\textbf{Formulas for the generators.}} Here, we give the images under the isomorphism $\Psi_{d,n}$ of the generators of $\hY_{d,n}$. We recall that $\sum E_{\chi}=1$ in $\hY_{d,n}$, the sum being over $\Irr(\mathcal{T}_{d,n})$. Let $\chi\in\Irr(\mathcal{T}_{d,n})$ and set $\mu=\Comp(\chi)$. Then, by definition of $\pi_{\chi}$, it is straightforward to see that  $\pi_{\chi}^{-1}(j)=\mu_1+\dots+\mu_{a-1}+\alpha$, where $\chi(t_j)=\xi_a$ and $\alpha=\sharp\{k\leq j\ |\ \chi(t_k)=\xi_a\}$.
\begin{itemize}
\item Let $j\in\{1,\dots,n\}$. We have:
\begin{equation}\label{form-t}
t_j=\sum_{\chi\in\text{Irr}(\mathcal{T}_{d,n})}E_{\chi}t_j=\sum_{\chi\in\text{Irr}(\mathcal{T}_{d,n})}E_{\chi}\chi(t_j)\ \longmapsto\ \sum_{\chi\in\text{Irr}(\mathcal{T}_{d,n})}\bun_{\chi,\chi}\,\chi(t_j)\ .
\end{equation}
It follows that the image of $e_i$, $i=1,\dots,n-1$, is a sum of diagonal matrices; the coefficient in position $\chi$ is 1 if $\chi(t_i)=\chi(t_{i+1})$, and 0 otherwise.
\item Let $j\in\{1,\dots,n\}$. We have:
\begin{equation}\label{form-X}
X_j=\sum_{\chi\in\text{Irr}(\mathcal{T}_{d,n})}E_{\chi}X_j\ \longmapsto\ \sum_{\chi\in\text{Irr}(\mathcal{T}_{d,n})}\bun_{\chi,\chi}\,\oX_{\pi^{-1}_{\chi}(j)}\ .
\end{equation}
\item Let $i\in\{1,\dots,n-1\}$. We have:
\begin{equation}\label{form-g}g_i=\sum_{\chi\in\text{Irr}(\mathcal{T}_{d,n})}E_{\chi}g_i\ \ \ \ \ \ \text{and}\ \ \ \ \ E_{\chi}g_i\longmapsto\left\{\begin{array}{ll}
u\,\bun_{\chi,s_i(\chi)} & \text{if $s_i(\chi)\neq\chi$,}\\[0.4em]
\bun_{\chi,\chi}\,\og_{\pi_{\chi}^{-1}(i)} & \text{if $s_i(\chi)=\chi$\ .}
\end{array}\right.
\end{equation}
The first line follows from $\pi_{s_i(\chi)}=s_i\pi_{\chi}$ if $s_i(\chi)\neq\chi$. The second line follows from $\pi^{-1}_{\chi}(i+1)=\pi^{-1}_{\chi}(i)+1$ if $s_i(\chi)=\chi$.
\end{itemize}

\subsection{Isomorphism Theorem for cyclotomic Yokonuma--Hecke algebras}

Let $\bv=(v_1,\dots,v_m)\subset\C[u^{\pm1},v]\backslash\{0\}$ be an $m$-tuple of non-zero parameters for a certain $m\in\Z_{>0}$ (equivalently, one could consider $v_1,\dots,v_m$ as indeterminates and work over the extended ring $\C[u^{\pm1},v,v_1^{\pm1},\dots,v_m^{\pm1}]$). The cyclotomic Yokonuma--Hecke algebra $Y_{d,n}(\bv)$ is the quotient of the affine Yokonuma--Hecke algebra $\hY_{d,n}$ by the relation
\begin{equation}\label{rel-cyc}
(X_1-v_1)\dots(X_1-v_m)=0\ .
\end{equation}
It is shown in \cite{ChPo2} that the algebra $Y_{d,n}(\bv)$ is a free $\C[u^{\pm1},v]$-module with basis
\[\{t_1^{a_1}\dots t_n^{a_n}X^{\lambda}g_w\ |\ a_1,\dots,a_n\in\{1,\dots,d\}\,,\ \lambda\in\{0,\dots,m-1\}^n\,,\ w\in\mathfrak{S}_n\}\ .\]
In particular, if $m=1$, $Y_{d,n}(\bv)$ is isomorphic to the Yokonuma--Hecke algebra $Y_{d,n}$.

Similarly, the cyclotomic Hecke algebra $H_n(\bv)$ (or the Ariki--Koike algebra) is the quotient of the affine Hecke algebra $\hH_{n}$ by the relation
$(\oX_1-v_1)\dots(\oX_1-v_m)=0$. Equivalently, it is the quotient of the cyclotomic Yokonuma--Hecke algebra $Y_{d,n}(\bv)$ by the relations $t_j=1$, $j=1,\dots,n$. It is a free $\C[u^{\pm1},v]$-module with basis $\{\oX^{\lambda}\og_w\ |\ \lambda\in\{0,\dots,m-1\}^n\,,\ w\in\mathfrak{S}_n\}$.

For $\mu\mmodels_d n$, we set $H(\bv)^{\mu}:=H_{\mu_1}(\bv)\otimes\dots\otimes H_{\mu_d}(\bv)$. By definition, $H(\bv)^{\mu}$ is the quotient of the algebra $\hH^{\mu}$ by the relations
\begin{equation}\label{rel-cyc-mu}
(\oX_{\mu_1+\dots+\mu_{a-1}+1}-v_1)\dots(\oX_{\mu_1+\dots+\mu_{a-1}+1}-v_m)=0\ ,\ \ \ \ \ a=1,\dots,d\ .
\end{equation}

\begin{corollary}
The cyclotomic Yokonuma--Hecke algebra $Y_{d,n}(\bv)$ is isomorphic to the direct sum $\bigoplus_{\mu\mmodels_d
\,n}\Mat_{m_{\mu}}\bigl(H(\bv)^{\mu}\bigr)$\,.
\end{corollary}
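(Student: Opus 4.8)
The plan is to derive the corollary directly from Theorem~\ref{theo-iso} by passing to the quotient on both sides. The isomorphism $\Psi_{d,n}\colon\hY_{d,n}\xrightarrow{\sim}\bigoplus_{\mu\mmodels_d n}\Mat_{m_{\mu}}(\hH^{\mu})$ identifies the two-sided ideal generated by $(X_1-v_1)\dots(X_1-v_m)$ with a two-sided ideal of the right-hand side; the task is to show that the latter ideal is precisely $\bigoplus_{\mu}\Mat_{m_{\mu}}(J^{\mu})$, where $J^{\mu}$ is the ideal of $\hH^{\mu}$ generated by the relations (\ref{rel-cyc-mu}), so that the quotient is $\bigoplus_{\mu}\Mat_{m_{\mu}}(\hH^{\mu}/J^{\mu})=\bigoplus_{\mu}\Mat_{m_{\mu}}(H(\bv)^{\mu})$.

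First I would compute $\Psi_{d,n}\bigl((X_1-v_1)\dots(X_1-v_m)\bigr)$ using Formula (\ref{form-X}). Since $\Psi_{d,n}(X_1)=\sum_{\chi}\bun_{\chi,\chi}\,\oX_{\pi_{\chi}^{-1}(1)}$ is a diagonal matrix in each block, the image of the cyclotomic polynomial in $X_1$ is the block-diagonal element $\sum_{\chi}\bun_{\chi,\chi}\,(\oX_{\pi_{\chi}^{-1}(1)}-v_1)\dots(\oX_{\pi_{\chi}^{-1}(1)}-v_m)$. For a character $\chi$ with $\Comp(\chi)=\mu$, the formula for $\pi_{\chi}^{-1}$ recalled just after Theorem~\ref{theo-iso} gives $\pi_{\chi}^{-1}(1)=\mu_1+\dots+\mu_{a-1}+1$ when $\chi(t_1)=\xi_a$, i.e. $\pi_{\chi}^{-1}(1)$ is exactly one of the ``first-in-block'' indices $\mu_1+\dots+\mu_{a-1}+1$ appearing in (\ref{rel-cyc-mu}). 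Conversely, as $\chi$ ranges over all characters with $\Comp(\chi)=\mu$ and all $a$ with $\mu_a\geq1$ occur as $\chi(t_1)$, every such index is hit. Hence the image of $(X_1-v_1)\dots(X_1-v_m)$ generates, as a two-sided ideal in $\Mat_{m_{\mu}}(\hH^{\mu})$, the ideal $\Mat_{m_{\mu}}(J^{\mu})$ where $J^{\mu}$ is generated by the elements (\ref{rel-cyc-mu}); here I use that a two-sided ideal of a matrix algebra $\Mat_k(R)$ is $\Mat_k(I)$ for $I$ a two-sided ideal of $R$, and that conjugating a diagonal idempotent matrix-entry by elementary matrices $\bun_{\chi,\chi'}$ produces the relation at every diagonal slot.

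Then I would conclude: $\Psi_{d,n}$ descends to an isomorphism between $\hY_{d,n}$ modulo the ideal generated by (\ref{rel-cyc}) — which is $Y_{d,n}(\bv)$ by definition — and $\bigoplus_{\mu}\Mat_{m_{\mu}}(\hH^{\mu})$ modulo $\bigoplus_{\mu}\Mat_{m_{\mu}}(J^{\mu})$, which is $\bigoplus_{\mu}\Mat_{m_{\mu}}(\hH^{\mu}/J^{\mu})=\bigoplus_{\mu}\Mat_{m_{\mu}}(H(\bv)^{\mu})$. As a sanity check I would compare ranks over $\C[u^{\pm1},v]$: the stated basis of $Y_{d,n}(\bv)$ has $d^n m^n n!$ elements, while $\sum_{\mu\mmodels_d n}m_{\mu}^2\cdot(\text{rank of }H(\bv)^{\mu})=\sum_{\mu}m_{\mu}^2\,m^n\,\mu_1!\cdots\mu_d!=m^n n!\sum_{\mu}m_{\mu}=m^n n!\,d^n$, using $\sum_{\mu\mmodels_d n}m_{\mu}=d^n$; the two agree, confirming the quotient ideal is exactly the expected one and nothing more collapses.

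The main obstacle is the bookkeeping in the middle step: one must verify carefully that the off-diagonal conjugates of the diagonal image of $(X_1-v_1)\dots(X_1-v_m)$, together with left and right multiplication inside each $\Mat_{m_{\mu}}(\hH^{\mu})$, generate all of $\Mat_{m_{\mu}}(J^{\mu})$ and in particular impose the cyclotomic relation on $\oX_{\mu_1+\dots+\mu_{a-1}+1}$ for \emph{every} $a$ with $\mu_a\neq0$ — this is where the surjectivity of $\chi\mapsto\chi(t_1)$ onto $\{\xi_a : \mu_a\geq1\}$ (for fixed $\Comp(\chi)=\mu$) is essential — and that no relation is imposed on the other generators $\oX_j$. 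Everything else is a formal application of the correspondence between ideals of $R$ and of $\Mat_k(R)$, together with Theorem~\ref{theo-iso}.
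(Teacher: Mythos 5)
Your proposal is correct and follows essentially the same route as the paper's proof: apply Theorem~\ref{theo-iso}, use Formula~(\ref{form-X}) for $j=1$ to see that the image of the ideal generated by $(X_1-v_1)\cdots(X_1-v_m)$ lands in $\bigoplus_{\mu}\Mat_{m_{\mu}}\bigl(\overline{I}^{\mu}_{\bv}\bigr)$, and get the reverse inclusion from the fact that every $a$ with $\mu_a\neq0$ occurs as $\chi(t_1)$ for some $\chi$ with $\Comp(\chi)=\mu$, so that each generator of (\ref{rel-cyc-mu}) appears as a lone diagonal entry and the ideal property spreads it over the whole matrix block. The rank comparison at the end is a harmless extra consistency check not present in the paper.
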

\begin{proof} Let $I_{\bv}$ be the (two-sided) ideal of $\hY_{d,n}$ generated by the left hand side of the relation (\ref{rel-cyc}). For $\mu\mmodels_d n$, let $\overline{I}^{\mu}_{\bv}$ be the ideal of $\hH^{\mu}$ generated by the left hand sides of the relations (\ref{rel-cyc-mu}). The corollary follows from Theorem \ref{theo-iso} together with the fact that $\Psi_{d,n}(I_{\bv})=\bigoplus_{\mu\mmodels_d\,n}\Mat_{m_{\mu}}\bigl(\overline{I}^{\mu}_{\bv}\bigr)$. It remains to check this fact.

The inclusion ``$\subset$" follows at once from Formula (\ref{form-X}) for $j=1$. For the other inclusion, let $\mu\mmodels_d n$. Let $a\in\{1,\dots,d\}$ such that $\mu_a\neq0$, so that there is a character $\chi$ with $\Comp(\chi)=\mu$ and $\chi(t_1)=\xi_a$. Again, Formula (\ref{form-X}) for $j=1$ gives $\Psi_{d,n}(E_{\chi}X_1)=\bun_{\chi,\chi}\oX_{\mu_1+\dots+\mu_{a-1}+1}$. Therefore, for every generators of $\overline{I}^{\mu}_{\bv}$, we have in $\Psi_{d,n}(I_{\bv})$ a matrix in $\Mat_{m_{\mu}}\bigl(\hH^{\mu}\bigr)$ with the generator as one diagonal element and $0$ everywhere else. As $\Psi_{d,n}(I_{\bv})$ is an ideal, this shows that $\Mat_{m_{\mu}}\bigl(\overline{I}^{\mu}_{\bv}\bigr)$ is included in $\Psi_{d,n}(I_{\bv})$.
\end{proof}

\section{Markov traces on affine Yokonuma--Hecke algebras}\label{Sec-Mark}
From now on, we extend the ground ring $\C[u^{\pm1},v]$ to $\C[u^{\pm1},v^{\pm1}]$, and we consider our algebras over this extended ring.

\subsection{Definition of Markov traces on $\{\hY_{d,n}\}_{n\geq1}$ and $\{\hH_n\}_{n\geq1}$}

A Markov trace on the family of algebras $\{\hY_{d,n}\}_{n\geq1}$ is a family of linear functions $\{\rho_{d,n}\ :\ \hY_{d,n}\to \mathbb{C}[u^{\pm1},v^{\pm1}]\}_{n\geq1}$ satisfying:
\begin{equation}\label{Markov-rho}
\begin{array}{ll}
\rho_{d,n}(xy)=\rho_{d,n}(yx)\ ,& \text{$n\geq1$ and $x,y\in \hY_{d,n}$;}\\[0.2em]
\rho_{d,n+1}(xg_n)=\rho_{d,n+1}(xg_n^{-1})=\rho_{d,n}(x)\ ,\quad & \text{$n\geq1$ and $x\in \hY_{d,n}$.}
\end{array}
\end{equation}
A Markov trace on the family of algebras $\{\hH_{n}\}_{n\geq1}$ is a family of linear functions $\{\tau_n\ :\ \hH_{n}\to \mathbb{C}[u^{\pm1},v^{\pm1}]\}_{n\geq1}$ satisfying:
\begin{equation}\label{Markov-tau}
\begin{array}{ll}
\tau_n(xy)=\tau_n(yx)\ ,& \text{$n\geq1$ and $x,y\in \hH_{n}$;}\\[0.2em]
\tau_{n+1}(x\og_n)=\tau_{n+1}(x\og_n^{-1})=\tau_n(x)\ ,\quad & \text{$n\geq1$ and $x\in \hH_{n}$.}
\end{array}
\end{equation}

Recall the definition of $\Soc_d$ from Section \ref{sec-def}. For each $\overline{\mu}\in\Soc_d$ and each $a\in\{1,\dots,d\}$ such that $\overline{\mu}_a\neq0$, we choose a Markov trace $\{\tau^{\overline{\mu},a}_n\}_{n\geq1}$ on $\{\hH_n\}_{n\geq1}$. By convention, $\hH_0:=\C[u^{\pm1},v^{\pm1}]$ and maps of the form $\tau_0^{\overline{\mu},a}$ are identities on $\hH_0$. Below in (\ref{rho-n}), each term in the sum over $\mu\mmodels_d n$ acts on $\Mat_{m_{\mu}}(\hH^{\mu})$. We skip the proof of the following theorem. It can be done exactly as in \cite[Lemma 5.4]{JaPo}.
\begin{theorem}\label{theo-mark}
The following maps form a Markov trace on $\{\hY_{d,n}\}_{n\geq1}$:
\begin{equation}\label{rho-n}
\Bigl(\sum_{\mu\mmodels_d n}(\tau^{\overline{\mu},1}_{\mu_1}\otimes\dots\otimes \tau^{\overline{\mu},d}_{\mu_d})\circ\operatorname{Tr}_{\Mat_{m_{\mu}}} \Bigr)\circ\Psi_{d,n}\ ,\ \ \ \ \ \ \ \ n\geq 1\,.
\end{equation}
\end{theorem}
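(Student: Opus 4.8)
The plan is to verify the two families of axioms in~(\ref{Markov-rho}) separately, using that $\Psi_{d,n}$ is an isomorphism of algebras (Theorem~\ref{theo-iso}) together with the explicit formulas~(\ref{form-t})--(\ref{form-g}). The trace property is immediate: since $\Psi_{d,n}$ is an algebra isomorphism, it is enough that the map $\sum_{\mu\mmodels_d n}\bigl(\tau^{\overline\mu,1}_{\mu_1}\otimes\dots\otimes\tau^{\overline\mu,d}_{\mu_d}\bigr)\circ\operatorname{Tr}_{\Mat_{m_\mu}}$ be a trace on $\bigoplus_{\mu\mmodels_d n}\Mat_{m_\mu}(\hH^\mu)$. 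A direct sum of traces is a trace, so one works on a single summand; there the ordinary matrix trace satisfies $\operatorname{Tr}(AB)=\sum_{i,j}A_{ij}B_{ji}$ and $\operatorname{Tr}(BA)=\sum_{i,j}B_{ji}A_{ij}$, so composing it with any linear map $\theta\colon\hH^\mu\to\C[u^{\pm1},v^{\pm1}]$ satisfying $\theta(ab)=\theta(ba)$ yields $\theta\circ\operatorname{Tr}(AB)=\theta\circ\operatorname{Tr}(BA)$; and $\tau^{\overline\mu,1}_{\mu_1}\otimes\dots\otimes\tau^{\overline\mu,d}_{\mu_d}$ is such a $\theta$ because a tensor product of traces on the factors of $\hH^\mu=\hH_{\mu_1}\otimes\dots\otimes\hH_{\mu_d}$ is a trace.

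For the Markov conditions, by linearity it suffices to take $x=E_\chi X^\lambda g_{w^{-1}}$ from the basis~(\ref{E-basis}) of $\hY_{d,n}$, and we set $\mu:=\Comp(\chi)$. Inside $\hY_{d,n+1}$ we write $E_\chi=\sum_{b=1}^d E_{\chi^{(b)}}$, where $\chi^{(b)}\in\Irr(\mathcal{T}_{d,n+1})$ is the extension of $\chi$ with $\chi^{(b)}(t_{n+1})=\xi_b$; its $d$-composition $\nu^{(b)}$ is obtained from $\mu$ by replacing $\mu_b$ with $\mu_b+1$. Since $w$ fixes $n+1$ one has $\ell(w^{-1}s_n)=\ell(w^{-1})+1$, hence $g_{w^{-1}}g_n=g_{(s_nw)^{-1}}$, and by~(\ref{iso-aff}) the $b$-th term of $\Psi_{d,n+1}(xg_n)=\sum_b\Psi_{d,n+1}(E_{\chi^{(b)}}X^\lambda g_{(s_nw)^{-1}})$ is supported on the matrix unit $\bun_{\chi^{(b)},\,s_nw(\chi^{(b)})}$. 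Thus $\operatorname{Tr}$ annihilates every summand except those with $s_nw(\chi^{(b)})=\chi^{(b)}$, which (using again that $w$ fixes $n+1$) occurs exactly when $\chi(t_n)=\xi_b$ and $w(\chi)=\chi$. As $\Psi_{d,n}(x)$ is supported on $\bun_{\chi,w(\chi)}$, $\rho_{d,n}(x)$ also vanishes unless $w(\chi)=\chi$; so if $w(\chi)\neq\chi$ both sides of the identity to be proved are $0$, and otherwise there is a single contributing index $b$, the one with $\chi(t_n)=\xi_b$. In particular $\mu_b\geq1$, so the composition $\nu:=\nu^{(b)}$ has $\overline\nu=\overline\mu$, and the Markov traces $\tau^{\overline\nu,a}$ appearing in $\rho_{d,n+1}$ are literally the traces $\tau^{\overline\mu,a}$ appearing in $\rho_{d,n}$.

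Put $q:=\mu_1+\dots+\mu_b$. From the description of $\pi_\chi^{-1}$ recalled after Theorem~\ref{theo-iso} one compares $\pi_{\chi^{(b)}}$ with $\pi_\chi$: for $j\leq n$, $\pi_{\chi^{(b)}}^{-1}(j)$ equals $\pi_\chi^{-1}(j)$ if $\chi(t_j)=\xi_a$ with $a\leq b$ and equals $\pi_\chi^{-1}(j)+1$ if $a>b$, while $\pi_{\chi^{(b)}}^{-1}(n+1)=q+1$. It follows that $\pi_{\chi^{(b)}}^{-1}s_n\pi_{\chi^{(b)}}=s_q$, the last simple transposition of the $b$-th block of $\mathfrak{S}^\nu$, and that, under $\hH^\nu=\hH_{\nu_1}\otimes\dots\otimes\hH_{\nu_d}$, the elements $\oX^{\pi_{\chi^{(b)}}^{-1}(\lambda)}$ and $\og_{\pi_{\chi^{(b)}}^{-1}w^{-1}\pi_{\chi^{(b)}}}$ have, in each block $a\neq b$, the same tensor factors as $\oX^{\pi_\chi^{-1}(\lambda)}$ and $\og_{\pi_\chi^{-1}w^{-1}\pi_\chi}$, and in the $b$-th block are the images of the latter under the length-preserving embedding $\hH_{\mu_b}\hookrightarrow\hH_{\mu_b+1}$, so that neither the last generator nor the new $\oX$-variable of $\hH_{\mu_b+1}$ occurs there. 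Since lengths add over the blocks of $\mathfrak{S}^\mu$ and of $\mathfrak{S}^\nu$, and $\ell((s_nw)^{-1})=\ell(w^{-1})+1$, the power of $u$ in~(\ref{iso-aff}) is the same at level $n$ and $n+1$. Hence, if $\Psi_{d,n}(x)=c\,\bun_{\chi,\chi}\,(z_1\otimes\dots\otimes z_d)$ with $z_a\in\hH_{\mu_a}$, then $\Psi_{d,n+1}(xg_n)=c\,\bun_{\chi^{(b)},\chi^{(b)}}\,(z_1\otimes\dots\otimes z_b\og_{\mu_b}\otimes\dots\otimes z_d)$, where $\og_{\mu_b}$ is the last generator of $\hH_{\mu_b+1}$ and $z_b$ is seen inside $\hH_{\mu_b+1}$. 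Applying $\operatorname{Tr}$ and then $\bigotimes_a\tau^{\overline\mu,a}_{\nu_a}$, the Markov relation $\tau^{\overline\mu,b}_{\mu_b+1}(z_b\og_{\mu_b})=\tau^{\overline\mu,b}_{\mu_b}(z_b)$ of~(\ref{Markov-tau}) (applicable because $\mu_b\geq1$) turns this into $\rho_{d,n}(x)$, the remaining factors being unchanged; so $\rho_{d,n+1}(xg_n)=\rho_{d,n}(x)$.

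The identity $\rho_{d,n+1}(xg_n^{-1})=\rho_{d,n}(x)$ is obtained the same way, after writing $g_n^{-1}=u^{-2}g_n-u^{-2}v\,e_n$ and using $E_\chi e_n=E_{\chi^{(b)}}$ for the distinguished $b$ together with $\overline{e_n}=1$ in $\hH_{n+1}$ (hence $\og_{\mu_b}^{-1}=u^{-2}\og_{\mu_b}-u^{-2}v$): the computation then produces $\tau^{\overline\mu,b}_{\mu_b+1}(z_b\og_{\mu_b}^{-1})=\tau^{\overline\mu,b}_{\mu_b}(z_b)$ by the second Markov condition in~(\ref{Markov-tau}). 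I expect the only substantial point to be the block-by-block comparison of $\pi_{\chi^{(b)}}$ with $\pi_\chi$ carried out above, together with the observation that the new strand is always adjoined at the end of an already non-empty block of $\nu$, which is precisely what keeps the socle — and hence the chosen family of affine-Hecke Markov traces — stable when passing from $n$ to $n+1$; this is exactly the bookkeeping done, in the non-affine case, in~\cite[Lemma~5.4]{JaPo}, the presence of $X^\lambda$ adding only the transparent tracking of $\oX$-exponents, which never involves the newly created variable.
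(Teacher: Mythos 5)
Your proof is correct. The paper itself omits the argument, saying only that it can be done exactly as in \cite[Lemma 5.4]{JaPo}, and your verification is precisely that argument transported to the affine setting: reduce to basis elements $E_{\chi}X^{\lambda}g_{w^{-1}}$, decompose $E_{\chi}=\sum_b E_{\chi^{(b)}}$ in $\hY_{d,n+1}$, observe that only the block with $\chi(t_n)=\xi_b$ and $w(\chi)=\chi$ contributes a diagonal entry (so the socle, hence the chosen family of traces, is unchanged), match the powers of $u$ via the block-wise comparison of $\pi_{\chi^{(b)}}$ with $\pi_{\chi}$, and conclude with the Markov conditions (\ref{Markov-tau}) in the $b$-th tensor factor — with the genuinely affine extra point, the tracking of the $\oX$-exponents, handled correctly.
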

Roughly speaking, to construct a Markov trace on $\hY_{d,n}$, after having applied the isomorphism $\Psi_{d,n}$ and the usual trace of a matrix, we must choose and apply a Markov trace on each component of $\hH^{\mu}=\hH_{\mu_1}\otimes\dots\otimes\hH_{\mu_d}$ for each $\mu$. This choice of Markov traces is restricted: if $\mu$ and $\mu'$ have the same number of non-zero components, the chosen Markov traces must coincide; otherwise, they can be chosen independently.

\paragraph{\textbf{Basic Markov traces.}} Recall the bijection (\ref{bij-Soc}) between $\Soc_d$ and non-empty subsets of $\{1\dots,d\}$. Following the theorem, we define some distinguished Markov traces as follows:
\begin{itemize}
\item Choose a non-empty $S\subset\{1,\dots,d\}$ and consider the associated $\mu^S\in\Soc_d$. Choose a Markov trace $\{\tau^{a}_n\}_{n\geq1}$ on $\{\hH_n\}_{n\geq1}$ for each $a\in S$, and set $\tau^{\mu^S\!,\,a}_n=\tau^{a}_n$, $n\geq1$, in (\ref{rho-n}).
\item Then, in (\ref{rho-n}), set all other Markov traces $\{\tau^{\overline{\mu},a}_n\}_{n\geq1}$ with $\overline{\mu}\neq\mu^S$ to be $0$.
\end{itemize}
We denote formally the choice of Markov traces in the first item by $\btau$ and denote by $\{\rho_{d,n}^{S,\btau}\}_{n\geq1}$ the resulting Markov trace on $\{\hY_{d,n}\}_{n\geq1}$. We call it a \emph{basic} Markov trace. Every Markov traces constructed in the preceding theorem is a linear combination of basic Markov traces $\{\rho_{d,n}^{S,\btau}\}_{n\geq1}$, where $S$ and $\btau$ vary.

\begin{example}
A map on $\hY_{d,n}$ can be seen, up to $\Psi_{d,n}$, as acting on the direct sum of matrix algebras. This way, for a given $S$, the maps $\rho_{d,n}^{S,\btau}$ are non-zero only on the summands $\Mat_{m_{\mu}}(\hH^{\mu})$ such that $\overline{\mu}=\mu^S$, that is, such that $\mu_a\neq 0$ if and only if $a\in S$. As examples:
\begin{itemize}
\item if $S=\{k\}$ then $\rho_{d,n}^{S,\btau}$ is non-zero only on $\Mat_{m_{\mu}}(\hH^{\mu})$, for $\mu=(0,\dots,0,n,0,\dots,0)$ with $n$ in position $k$. In this case, $\hH^{\mu}=\hH_n$;
\item we will see that it is enough to consider the situation $S=\{1,\dots,d\}$. In this case, $\rho_{d,n}^{S,\btau}$ is non-zero only on $\Mat_{m_{\mu}}(\hH^{\mu})$, for $\mu$ with all parts different from 0.
\end{itemize}
\end{example}

\begin{remarks}\label{rem-mark}
\textbf{(i)} By restriction to the subalgebra $Y_{d,n}$ of $\hY_{d,n}$, a Markov trace on $\{\hY_{d,n}\}_{n\geq1}$ reduces to a Markov trace on $\{Y_{d,n}\}_{n\geq1}$ (and similarly for $\hH_n$ and $H_n$). On $\{H_n\}_{n\geq1}$, there is a unique Markov trace up to a normalisation factor. Therefore, the choice of the Markov traces $\tau^{\overline{\mu},a}_n$ in the theorem above reduces, for $Y_{d,n}$, to a choice of an overall factor $\alpha_{\overline{\mu}}$ for each $\overline{\mu}\in\Soc_d$. This is the result proved in \cite{JaPo}. In other words, for $Y_{d,n}$, the basic Markov traces are parametrised by $\Soc_d$, or equivalently, by the non-empty subsets of $\{1,\dots,d\}$.

\textbf{(ii)} Let $\mathfrak{Mark}(\hH_n)$ be the space of Markov traces on $\{\hH_{n}\}_{n\geq 1}$. The space spanned by the basic Markov traces $\rho_{d,n}^{S,\btau}$ is isomorphic to
\[\bigoplus_{1\leq k \leq d}\left(\begin{array}{c} d \\ k \end{array}\right) \mathfrak{Mark}(\hH_n)^{\otimes k}\ .\]
If we restrict to a subspace of $\mathfrak{Mark}(\hH_n)$ of dimension $D$, we obtain a space of Markov traces on $\{\hY_{d,n}\}_{n\geq 1}$ of dimension $(D+1)^d-1$. In particular, for $Y_{d,n}$, the dimension is $2^d-1$. We note that a full description of the space $\mathfrak{Mark}(\hH_n)$ does not seem to be known (and similarly for the cyclotomic quotients $H_n(\bv)$ others than $H_n$).
\end{remarks}

\section{Invariants for links}\label{Sec-inv}

Let $\gamma$ be another indeterminate. We work from now on over the ring $R:=\C[u^{\pm1},v^{\pm1},\gamma^{\pm1}]$ and we consider now all algebras over this extended ring $R$.

We sketch a construction of invariants with values in $R$ for $\Z/d\Z$-framed solid torus links. We refer to \cite{La2,ChPo2} for definitions and fundamental results (as the analogues of Alexander and Markov theorems) concerning solid torus links and their $\Z/d\Z$-framed versions. Note that any invariant for $\Z/d\Z$-framed links is also an invariant of non-framed links, simply by considering links with all framings equal to 0.

The set of classical links is naturally included in the set of solid torus links (in other words, the braid group is naturally a subgroup of the affine braid group). The construction here includes, by restriction to the subalgebras $Y_{d,n}$, the construction for $\Z/d\Z$-framed classical links explained in \cite[Section 6]{JaPo}. As the construction and the results equally apply to the classical and the solid torus situations, we will simply use the word \emph{link} to refer to both types of links.

\subsection{Definition of the invariants}

As in \cite{ChPo2}, we denote by $\hB_n$ the affine braid group on $n$ strands, and by $\Z/d\Z\wr \hB_n$ the $\Z/d\Z$-framed affine braid group. The generators of $\Z/d\Z\wr \hB_n$ are denoted $\sigma_1,\ldots,\sigma_{n-1},\sigma_0,t_1,\ldots,t_n$. The defining relations are (\ref{def-aff1})-(\ref{def-aff2}) with $g_i$ replaced by $\sigma_i$ and $X_1$ by $\sigma_0$. The algebra $\hY_{d,n}$ is thus a quotient of the group algebra of $\Z/d\Z\wr \hB_n$ by the relation (\ref{def-aff3}).

The subgroup of $\Z/d\Z\wr \hB_n$ generated by $\sigma_1,\ldots,\sigma_{n-1},\sigma_0$ is $\hB_n$. The algebra $\hH_{n}$ is a quotient of the group algebra of $\hB_n$ by the relation $\sigma_i^2=u^2+v\sigma_i$, $i=1,...,n-1$. Finally, the subgroup  $\sigma_1,\ldots,\sigma_{n-1}$ of $\hB_n$ is the classical braid group.

\paragraph{\textbf{Invariants $P^{\tau}_L(u,v)$ from $\hH_{n}$.}} Let $\{\tau_n\}_{n\geq1}$ be a Markov trace on $\{\hH_n\}_{n\geq1}$. From the Alexander and Markov theorems for non-framed links (see \cite{La2}), we construct the invariant $P_L^{\tau}(u,v)$, for a link $L$, as follows:
\[L\longmapsto\ \beta_L\in \hB_n\ \longmapsto\,\pi_n(\beta_L)\,\longmapsto \,\tau_{n}\bigl(\pi_{n}(\beta_L)\bigr)=:P_L^{\tau}(u,v)\in\C[u^{\pm1},v^{\pm1}]\ ,\]
where $\beta_L$ is a braid closing to $L$ and $\pi_n$ is the natural morphism from $R\hB_n$ to $\hH_n$, given on the generators by $\sigma_i\mapsto \og_i$, $i=1,...,n-1$ and $\sigma_0\mapsto \oX_1$. 

From the fact that there is a unique Markov trace (up to normalisation) on the usual Hecke algebras $\{H_n\}_{\geq1}$, all the invariants $P_L^{\tau}$, restricted to the set of classical links, reduce (up to normalisation) to the unique invariant coming from $H_n$, the HOMFLYPT polynomial.

\paragraph{\textbf{Invariants $P_L^{d,S,\btau}(u,v,\gamma)$ from $\hY_{d,n}$.}}
We consider the following map from $R[\Z/d\Z\wr \hB_n]$ to $\hY_{d,n}$ given on the generators by:
\[\delta_{d,n}\ :\ \ \ t_j\mapsto t_j\,,\ \ \ \ \ \  \sigma_0\mapsto X_1\,,\ \ \ \ \ \ \ \sigma_i\mapsto \bigl(\gamma+(1-\gamma)e_i\bigr)g_i\ .\]
One proves as in \cite[Section 6]{JaPo} that, first, $\delta_{d,n}$ extends to a morphism of algebras, and moreover, that the following procedure defines invariants for $\Z/d\Z$-framed links.
\begin{definition} Let $\{\rho_{d,n}\}_{n\geq1}$ be a Markov trace on $\{\hY_{d,n}\}_{n\geq1}$. For a $\Z/d\Z$-framed link $L$, the invariant $P_L^{\rho_d}(u,v,\gamma)$ is defined as follows
\[L\longmapsto\ \beta_L\in \Z/d\Z\wr \hB_n\ \longmapsto\,\delta_{d,n}(\beta_L)\,\longmapsto \,\rho_{d,n}\bigl(\delta_{d,n}(\beta_L)\bigr)=:P_L^{\rho_d}(u,v,\gamma)\in R\ ,\]
where $\beta_L$ is a $\Z/d\Z$-framed braid closing to $L$.
\end{definition}
From the preceding section, it is enough to consider the basic Markov traces $\{\rho^{S,\btau}_{d,n}\}_{n\geq1}$. We denote by $P_L^{d,S,\btau}$ the corresponding invariant and refer to it as a \emph{basic} invariant.

If we restrict a basic invariant to the set of classical $\Z/d\Z$-framed links, it does not depend on $\btau$, and coincides with the invariants constructed in \cite{JaPo}.

\begin{remark}\label{quadr-gamma} In the definition of the maps $\delta_{d,n}$, a rule $\sigma_i\mapsto (\alpha +\beta e_i)g_i$ would be enough to give a morphism of algebras. The condition $\alpha+\beta=1$ is necessary for the construction of invariants. We note that, considering the map $\delta_{d,n}$ is equivalent to changing the quadratic relations $g_i^2=u^2+ve_ig_i$ to $g_i^2= u^2\gamma^2+u^2(1-\gamma^2)e_i+ve_ig_i$. The role of $\gamma$ is therefore to interpolate between different presentations of $\hY_{d,n}$.
\end{remark}

\subsection{Comparison with other approaches}

\paragraph{\textbf{For non-framed links from $\hH_n$ \cite{La2}.}} Define $\tX_i\in\hY_{d,n}$, $i=1,\dots,n$, by the following formulas:
\[\tX_1:=X_1\ \ \ \ \ \ \text{and}\ \ \ \ \ \ \tX_{i+1}:=g_i^{-1}\tX_i g_i\,,\ \ i=1,\dots,n-1\,.\]
Similarly, we have the images $\overline{\tX}_1,\dots,\overline{\tX}_n$ of $\tX_1,\dots,\tX_n$ in $\hH_n$. 

Let $\bx:=\{x_a\}_{a\in\Z}\subset\C[u^{\pm1},v^{\pm1}]$ be a set of parameters with $x_0:=1$. From the results in \cite{La2} (see Remark \ref{rem-compare0} below), we have a unique Markov trace on $\{\hH_n\}_{n\geq1}$, which satisfies in addition
\[\tau^{\bx}_n\bigl(\overline{\tX}^a_n\,h\bigr)=x_a\tau^{\bx}_n(h)\,,\ \ \ \ \ \ \forall n\geq1\,,\ \ \forall a\in\Z\,,\ \ \forall h\in\hH_{n-1}\ .\]
The corresponding invariant of non-framed links is denoted $P^{\bx}_L(u,v)$.

\begin{remark}\label{rem-compare0} In \cite{La2}, the quadratic relation of $\hH_n$ was $\og_i^2=q+(q-1)\og_i$ and a certain trace $\text{tr}$, depending on another parameter $z$ was constructed. Setting $\lambda:=\frac{z+1-q}{qz}$, an invariant $\mathcal{X}_L(\sqrt{q},\sqrt{\lambda})$ was obtained, by rescaling the generators, $\og_i\leadsto \sqrt{\lambda}\og_i$, and then rescaling the trace. This is equivalent, in our approach, to setting 
$$u=\sqrt{\lambda q}\,,\ \ \ v=\sqrt{\lambda}(q-1)\ \ \ \ \ \text{and}\ \ \ \ \ \tau_n^{\bx}:=(\sqrt{\lambda}z)^{1-n}\text{tr}=\bigl(v^{-1}(1-u^2)\bigr)^{n-1}\text{tr}\,,\ \ \forall n\geq1\,.$$
In conclusion, we have $\mathcal{X}_L(\sqrt{q},\sqrt{\lambda})=P^{\bx}_L(u,v)$.
\end{remark}

\paragraph{\textbf{For $\Z/d\Z$-framed links from $\hY_{d,n}$ \cite{ChPo2,ChJKL,JuLa2}.}} Recall that $\{\xi_1,\dots,\xi_d\}$ is the set of $d$-th roots of unity. Fix a non-empty subset $D\subset\{1,\dots,d\}$ and, for each $k\in D$, a set $\bx^{(k)}:=\{x^{(k)}_a\}_{a\in\Z}$ of parameters in $\C[u^{\pm1},v^{\pm1}]$ with $x^{(k)}_0=1$. Denote formally $\bx$ the set $\bx^{(k)}$ with $k\in D$.

From the results of \cite{ChPo2} (and \cite{JuLa2} in the non-affine case), we have a unique Markov trace, denoted $\{\trho_{d,n}^{D,\bx}\}_{n\geq1}$, on $\{\hY_{d,n}\}_{n\geq1}$ which satisfies in addition, for all $n\geq1$,
\begin{equation}\label{cond-x}
\trho_{d,n}^{D,\bx}\bigl(\tX^a_nt_n^b\,h\bigr)=x_{a,b}\,\trho_{d,n}^{D,\bx}(h)\,,\ \ \ \ \ \ \forall a\in\Z\,,\ \ \forall b\in\{1,\dots,d\}\,,\ \ \forall h\in\hY_{d,n-1}\ ,
\end{equation}
where the parameters $x_{a,b}$ are given by 
\begin{equation}\label{para-x}
x_{a,b}=\frac{1}{\vert D\vert}\sum_{k\in D}x^{(k)}_a\xi_k^b\,,\ \ \ \ \ \text{for all $a\in\Z$ and $b\in\{1,\dots,d\}$.}
\end{equation}
The corresponding invariant of $\Z/d\Z$-framed links, we denote $\tP^{d,D,\bx}_L(u,v,\gamma)$. By restriction to classical $\Z/d\Z$-framed links, the parameters $\bx$ do not appear, and we obtain invariants labelled by $d$ and $D$, denoted $\tP^{d,D}_L(u,v,\gamma)$ (see Remarks below).

\begin{remarks}\label{rem-compare1} \textbf{(i)} In \cite{ChPo2}, the quadratic relation of $\hY_{d,n}$ was $g_i^2=1+(q-q^{-1})e_ig_i$ and a certain trace $\text{tr}$, depending on another parameter $z$ was constructed. An invariant $\Phi^{d,D,\bx}_L(q,z)$ was obtained. With $\lambda_D:=\frac{\vert D\vert z-(q-q^{-1})}{\vert D\vert z}$, this is equivalent, in our approach, to setting 
$$u=\sqrt{\lambda_D}\,,\ \ v=\sqrt{\lambda_D}(q-q^{-1})\,,\ \ \gamma=1\ \ \ \ \text{and}\ \ \ \ \trho_{d,n}^{D,\bx}:=\bigl(\vert D\vert v^{-1}(1-u^2)\bigr)^{n-1}\text{tr}\,,\ \  \forall n\geq1\,.$$
In conclusion, we have $\Phi^{d,D,\bx}_L(q,z)=\tP^{d,D,\bx}_L(u,v,1)\,$.
Note that the construction in \cite{ChPo2} corresponds to the particular case $\gamma=1$ here.

\textbf{(ii)} The restriction of the above procedure to the subalgebra $Y_{d,n}$ (the non-affine case) gives the comparison of invariants constructed here with the invariants studied in \cite{ChJKL}. The invariants only depend on $d$ and $D$ (not on $\bx$), and were denoted in \cite{ChJKL} by $\Phi_{d,D}(q,z)$.

\textbf{(iii)} Originally, in \cite{JuLa2}, invariants (in the non-affine case) were constructed using $Y_{d,n}$ with a different quadratic equation, namely with $g_i^2=1+(q^2-1)e_i+(q^2-1)e_ig_i$. A certain trace $\tilde{\text{tr}}$, depending on a parameter $\tilde{z}=q z$ was constructed and invariants, denoted $\Gamma_{d,S}$ in \cite{ChJKL}, were obtained. With the same $\lambda_D$ as above, the rescaling of the generators is now $g_i\leadsto \sqrt{\lambda_D}q^{-1} g_i$. A short calculation and comparison with the formula in Remark \ref{quadr-gamma} shows that $u$ and $v$ are as in item \textbf{(i)}, while now $\gamma=q^{-1}$. As a conclusion, we have, for any classical $\Z/d\Z$-framed link $L$,
\begin{equation}\label{compare}
\Phi_{d,D}(q,z)(L)=\tP^{d,D}_L(u,v,1)\ \ \ \ \ \text{and}\ \ \ \ \ \ \Gamma_{d,D}(q,z)(L)=\tP^{d,D}_L(u,v,q^{-1})\ .
\end{equation}
\end{remarks}

\subsection{Comparison in terms of the basic invariants $P^{d,S,\btau}_L(u,v,\gamma)$}

 We fix as above $D$ and $\bx$. It remains to identify the Markov trace $\{\trho_{d,n}^{D,\bx}\}_{n\geq1}$ in terms of the basic Markov traces $\{\rho_{d,n}^{S,\btau}\}_{n\geq1}$ constructed after Theorem \ref{theo-mark}.
\begin{proposition} For each $S\subseteq D$, let $\btau$ be obtained by taking, for each $k\in S$, the Markov trace $\{\tau^{\bx^{(k)}}\}_{n\geq1}$ in position $k$. Then we have
\begin{equation}\label{compare1}
\{\trho_{d,n}^{D,\bx}\}_{n\geq1}=\frac{1}{\vert D\vert}\sum_{S\subseteq D}\bigl(v^{-1}(1-u^2)\bigr)^{\vert S\vert -1}\,\{\rho_{d,n}^{S,\btau}\}_{n\geq1}\,.
\end{equation}
\end{proposition}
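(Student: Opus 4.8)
The plan is to identify the right-hand side of (\ref{compare1}) using the uniqueness property of $\{\trho_{d,n}^{D,\bx}\}_{n\geq1}$. Since the Markov traces on $\{\hY_{d,n}\}_{n\geq 1}$ form a vector space and each basic Markov trace is a Markov trace by Theorem \ref{theo-mark}, the family $\{\sigma_{d,n}\}_{n\geq1}$ given by the right-hand side of (\ref{compare1}) is a Markov trace. By the cited results of \cite{ChPo2} (and \cite{JuLa2}), $\{\trho_{d,n}^{D,\bx}\}_{n\geq1}$ is the \emph{unique} Markov trace on $\{\hY_{d,n}\}_{n\geq1}$ satisfying the normalisation (\ref{cond-x}) with the scalars $x_{a,b}$ of (\ref{para-x}). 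Hence it suffices to show that $\{\sigma_{d,n}\}_{n\geq1}$ satisfies (\ref{cond-x}).

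To do this I would first establish a reduction formula for the basic traces by pushing everything through $\Psi_{d,n}$. The key inputs: (i) the image of $\tX_n$ — the same inductive computation that yields (\ref{form-X}), using $\tX_{i+1}=g_i^{-1}\tX_i g_i$, the generator formulas (\ref{form-g}) and the rule $\pi_{s_i(\chi)}=s_i\pi_{\chi}$, shows that $\Psi_{d,n}(\tX_n)$ is the diagonal matrix with $(\chi,\chi)$-entry $\overline{\tX}_{\pi_{\chi}^{-1}(n)}$, which, since $\pi_{\chi}^{-1}(n)=\mu_1+\dots+\mu_k$ when $\chi(t_n)=\xi_k$ and $\mu=\Comp(\chi)$, is the top generator $\overline{\tX}_{\mu_k}$ of the $k$-th tensor factor $\hH_{\mu_k}$ of $\hH^{\mu}$; (ii) $\Psi_{d,n}(t_n)=\sum_{\chi}\bun_{\chi,\chi}\chi(t_n)$ from (\ref{form-t}); (iii) the branching of $\hY_{d,n-1}\subseteq\hY_{d,n}$ under the isomorphism, established as for $Y_{d,n}$ in \cite{JaPo}: for $h\in\hY_{d,n-1}$ the $(\chi,\chi)$-entry of $\Psi_{d,n}(h)$ on the summand $\mu=\Comp(\chi)$ lies in the subalgebra of $\hH^{\mu}$ obtained by replacing $\hH_{\mu_k}$ by $\hH_{\mu_k-1}$ (no $\overline{\tX}_{\mu_k}$), and summing over all $\chi$ with $\Comp(\chi)=\mu$, $\chi(t_n)=\xi_k$ recovers $\operatorname{Tr}$ of $\Psi_{d,n-1}(h)$ on the corresponding summand at level $n-1$ (one uses $m_{\mu}=\sum_{k:\mu_k>0}m_{\mu-\epsilon_k}$, where $\epsilon_k\in\Z_{\geq0}^d$ is the $k$-th unit vector). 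Applying $\operatorname{Tr}_{\Mat_{m_\mu}}$ and the tensor product of Markov traces, the defining property $\tau_m^{\bx^{(k)}}(\overline{\tX}_m^a\,h')=x_a^{(k)}\tau_m^{\bx^{(k)}}(h')$ on $\hH$ extracts the scalar $x_a^{(k)}$, the reduction $\tau_m^{\bx^{(k)}}|_{\hH_{m-1}}=(v^{-1}(1-u^2))\,\tau_{m-1}^{\bx^{(k)}}$ for $m\geq2$ (Remark \ref{rem-compare0}) contributes a factor $v^{-1}(1-u^2)$ when $\mu_k\geq2$ (and nothing, with the socle dropping from $S$ to $S\setminus\{k\}$, when $\mu_k=1$), and $\chi(t_n)^b=\xi_k^b$ comes out of $t_n^b$. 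Summing over the $\mu$ with $\overline{\mu}=\mu^S$, re-indexed by $\mu\mapsto\mu-\epsilon_k$, this gives, for $h\in\hY_{d,n-1}$, writing $\btau_S$ for the choice of traces attached to $S$ and $c:=v^{-1}(1-u^2)$,
\[\rho_{d,n}^{S,\btau_S}\bigl(\tX_n^a t_n^b\,h\bigr)=c\,\Bigl(\sum_{k\in S}\xi_k^b\, x_a^{(k)}\Bigr)\rho_{d,n-1}^{S,\btau_S}(h)\ +\ \sum_{k\in S}\xi_k^b\, x_a^{(k)}\,\rho_{d,n-1}^{S\setminus\{k\},\,\btau_{S\setminus\{k\}}}(h)\,.\]

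Finally, I would substitute this into $\sigma_{d,n}=\frac{1}{|D|}\sum_{S\subseteq D}c^{|S|-1}\rho_{d,n}^{S,\btau_S}$ and re-index the second group of terms by $T=S\setminus\{k\}$; the contributions with $k\in S$ and with $k\in D\setminus S$ then merge into one sum over $k\in D$, and $\sum_{k\in D}\xi_k^b x_a^{(k)}=|D|\,x_{a,b}$ gives $\sigma_{d,n}(\tX_n^a t_n^b h)=c\,|D|\,x_{a,b}\,\sigma_{d,n-1}(h)$ for $h\in\hY_{d,n-1}$. Taking $a=b=0$ (so $x_{0,0}=1$) yields $\sigma_{d,n}(h)=c\,|D|\,\sigma_{d,n-1}(h)$, and comparing the two identities gives precisely (\ref{cond-x}); uniqueness then concludes. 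I expect the real obstacle to be the branching statement in (iii) — the precise compatibility of $\Psi_{d,n}$ with $\Psi_{d,n-1}$ under the (index-shifted) inclusions $\hH^{\mu-\epsilon_k}\hookrightarrow\hH^{\mu}$ — and, to a lesser extent, nailing down the image of $\tX_n$; once these are settled the combinatorial recombination, though fiddly in its powers of $c$, is routine.
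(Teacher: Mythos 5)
Your proposal is correct and follows essentially the same route as the paper: both show the right-hand side is a Markov trace and verify the characterising condition (\ref{cond-x}) -- in the equivalent form $\rho_{d,n}(\tX_n^a t_n^b h)=\vert D\vert v^{-1}(1-u^2)\,x_{a,b}\,\rho_{d,n-1}(h)$ -- using the formula $\Psi_{d,n}(\tX_n)=\sum_{\chi}\bun_{\chi,\chi}\overline{\tX}_{\pi_{\chi}^{-1}(n)}$ and the matrix-side description of the embedding $\hY_{d,n-1}\subset\hY_{d,n}$ from \cite{JaPo}, then conclude by the uniqueness of $\{\trho_{d,n}^{D,\bx}\}_{n\geq1}$ from \cite{ChPo2}. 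Your explicit reduction formula for the basic traces and the recombination over $S\subseteq D$ simply spell out the ``straightforward calculation'' the paper leaves to the reader (only the easy case $n=1$, treated separately in the paper, is left implicit).
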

\begin{proof}[Sketch of proof]
Denote $\{\rho_n\}_{n\geq1}$ the Markov trace on $\{\hY_{d,n}\}_{n\geq1}$ defined by the right hand side of (\ref{compare1}). To prove the proposition, we need to check that Condition (\ref{cond-x}) is satisfied. 

For $n=1$, it is a straightforward verification. For $n>1$, one may check the equivalent condition $\rho_{n}(\tX^a_nt_n^b\,h)=|D|v^{-1}(1-u^2)x_{a,b}\,\rho_{n-1}(h)$. We note that it is enough to take $h=E_{\chi}X^{\lambda}g_{w^{-1}}$, where $\chi\in\text{Irr}(\mathcal{T}_{d,n})$ is such that $\Comp(\chi)=\mu$ where $\overline{\mu}=\mu^S$ for some $S\subset D$, and such that $w(\chi)=\chi$ (otherwise the condition is $0=0$). Then the condition can be checked by a straightforward calculation of both sides. One may use: an explicit description of the embedding $\hY_{d,n}\subset\hY_{d,n+1}$ on the matrix algebras side (see \cite[Section 3.4]{JaPo}); and the fact that $\Psi_{d,n}(\tX_j)=\sum_{\chi}\bun_{\chi,\chi}\overline{\tX}_{\pi_{\chi}^{-1}(j)}$ for $j=1,\dots,n$ (induction on $j$).
\end{proof}
We note that Formula (\ref{compare1}) is a triangular change of basis, with inverse
\[\{\rho_{d,n}^{S,\btau}\}_{n\geq1}=\bigl(v^{-1}(1-u^2)\bigr)^{1-\vert S\vert}\sum_{D\subseteq S}(-1)^{|S|- |D|}\vert D\vert\,\{\trho_{d,n}^{D,\bx}\}_{n\geq1}\,.\]
In particular, restricting to $Y_{d,n}$, we can forget the parameters $\bx$ on one hand, and the choice $\btau$ on the other. The proposition expresses the Juyumaya--Lambropoulou invariant associated to $D\subset\{1,\dots,d\}$ in terms of our basic invariants associated to $S\subset\{1,\dots,d\}$. The formulas are (with notations as in Remark \ref{rem-compare1} and coefficients $\alpha_{S}$ as in (\ref{compare1})):
\begin{equation}\label{compare2}
\Phi_{d,D}(q,z)(L)=\sum_{S\subseteq D}\alpha_{S}\, P_L^{d,S}(u,v,1)\ \ \ \ \text{and}\ \ \ \ \ \Gamma_{d,D}(q,z)(L)=\sum_{S\subseteq D}\alpha_{S}\, P_L^{d,S}(u,v,q^{-1})\,.
\end{equation}

\section{Properties of invariants}

As consequences of the construction using the isomorphism theorem, we prove several properties of the constructed invariants, focusing essentially on the non-framed links. We emphasize that these properties are valid for all non-framed links (classical and solid torus). 

\subsection{Comparison of invariants with different $d$ for non-framed links}\label{subsec-d}

Let $d>0$ and $S$ a non-empty subset of $\{1,\dots,d\}$. We denote $d':=|S|$. Let $\btau$ be any choice of $d'$ Markov traces on $\{\hH_n\}_{n\geq1}$. The following result says that, for non-framed links, it is enough to consider the situation $S=\{1,\dots,d\}$ for each $d>0$.
\begin{proposition}\label{prop1}
For any non-framed link $L$, we have $P^{d,S,\btau}_L=P^{d',\{1,\dots,d'\},\btau}_L$\,.
\end{proposition}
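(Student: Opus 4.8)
The strategy is to trace both invariants through the construction and reduce everything to an identity between the underlying Markov traces, evaluated on the image of an affine braid in the matrix-algebra model. First I would recall that the basic Markov trace $\rho_{d,n}^{S,\btau}$ is, by the Example following Theorem~\ref{theo-mark}, supported only on the summands $\Mat_{m_{\mu}}(\hH^{\mu})$ with $\overline{\mu}=\mu^S$; and that on such a summand it acts by $(\tau^{a_1}_{\mu_{a_1}}\otimes\dots\otimes\tau^{a_{d'}}_{\mu_{a_{d'}}})\circ\operatorname{Tr}_{\Mat_{m_{\mu}}}$, where $S=\{a_1<\dots<a_{d'}\}$ and only the $d'$ nonzero parts $\mu_{a_i}$ of $\mu$ intervene (the convention $\hH_0=\C[u^{\pm1},v^{\pm1}]$ kills the others). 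The key observation is that the assignment $\mu\mapsto(\mu_{a_1},\dots,\mu_{a_{d'}})$ is a bijection between the $d$-compositions $\mu\mmodels_d n$ with $\overline{\mu}=\mu^S$ and the $d'$-compositions $\nu\mmodels_{d'}n$ with all parts nonzero, i.e. $\overline{\nu}=\mu^{\{1,\dots,d'\}}$; under this bijection $m_\mu=m_\nu$, $\hH^{\mu}\cong\hH^{\nu}$, and the prescribed Markov traces match. So the target spaces and the trace maps on the matrix side are literally the same for $(d,S)$ and $(d',\{1,\dots,d'\})$.

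Next I would compare the two composite maps $\rho_{d,n}^{S,\btau}\circ\delta_{d,n}$ and $\rho_{d',n}^{\{1,\dots,d'\},\btau}\circ\delta_{d',n}$ on a braid $\beta\in\hB_n$ (note: a non-framed link is presented by a genuine affine braid, so all framing generators $t_j$ act trivially, i.e. $\beta$ lies in the image of $\hB_n\hookrightarrow\Z/d\Z\wr\hB_n$). The point is that only the characters $\chi$ with $\Comp(\chi)=\mu$, $\overline{\mu}=\mu^S$, contribute. For such $\chi$ the value $e_i$ takes, and hence the image $\delta_{d,n}(\sigma_i)=(\gamma+(1-\gamma)e_i)g_i$, depends only on whether $\chi(t_i)=\chi(t_{i+1})$; and via Formulas (\ref{form-t})--(\ref{form-g}) the block $\bun_{\chi,\chi'}\otimes(\text{element of }\hH^{\mu})$ that $\Psi_{d,n}$ produces depends on $\chi$ only through the induced colouring of $\{1,\dots,n\}$ by $d'$ colours (the colour of $j$ being the index $i$ with $\chi(t_j)=\xi_{a_i}$). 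In other words, the whole computation factors through the set of surjection-type $d'$-colourings of $\{1,\dots,n\}$, which is exactly the data that appears in the $(d',\{1,\dots,d'\})$ computation. Thus each contributing term on the $(d,S)$ side is equal to the corresponding term on the $(d',\{1,\dots,d'\})$ side, and conversely every term on the $(d',\{1,\dots,d'\})$ side arises this way. Summing over all contributing $\chi$ (equivalently, over all $\mu$ or all $\nu$) gives the equality of the two traces on $\delta(\beta)$, hence $P^{d,S,\btau}_L=P^{d',\{1,\dots,d'\},\btau}_L$.

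The main obstacle I anticipate is making the phrase "depends only on the induced $d'$-colouring" fully rigorous at the level of $\Psi_{d,n}$: one must check that the permutations $\pi_\chi$, the length exponents $\ell(w^{-1})-\ell(\pi_\chi^{-1}w^{-1}\pi_{w(\chi)})$, and the $\hH^{\mu}$-factors $\og_{\pi_\chi^{-1}w^{-1}\pi_{w(\chi)}}$, $\oX^{\pi_\chi^{-1}(\lambda)}$ all transport correctly along the bijection $\mu\leftrightarrow\nu$ — in particular that the $\hH^{\mu}$-to-$\hH^{\nu}$ identification (which just drops the $\hH_0=\C[u^{\pm1},v^{\pm1}]$ tensor factors sitting in the zero slots) is compatible with the formulas for the generators. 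This is bookkeeping rather than a genuine difficulty, because in the zero slots the Young subgroup is trivial and $\oX$-variables with those indices never occur; still, it is the part of the argument that needs care. A clean way to package it is to observe directly that there is an algebra isomorphism $\bigoplus_{\overline{\mu}=\mu^S}\Mat_{m_\mu}(\hH^{\mu})\xrightarrow{\ \sim\ }\bigoplus_{\overline{\nu}=\mu^{\{1,\dots,d'\}}}\Mat_{m_\nu}(\hH^{\nu})$ intertwining the relevant corners of $\Psi_{d,n}$ and $\Psi_{d',n}$ and the images of $\delta_{d,n},\delta_{d',n}$, after which the equality of invariants is immediate since $\rho^{S,\btau}_{d,n}$ and $\rho^{\{1,\dots,d'\},\btau}_{d',n}$ are the same map under this isomorphism.
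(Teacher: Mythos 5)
Your plan is correct and takes essentially the same route as the paper: the bijection between $d$-compositions $\mu$ with $\overline{\mu}=\mu^S$ and $d'$-compositions with all parts nonzero, the induced identification of characters (your ``$d'$-colouring'', with $\pi_{\chi}=\pi_{\chi'}$ and $\Mat_{m_{\mu}}(\hH^{\mu})=\Mat_{m_{\mu'}}(\hH^{\mu'})$), and the observation that on a non-framed braid the images under $\Psi_{d,n}$ and $\Psi_{d',n}$ of $\delta_{d,n}(\beta)$ coincide on the matching summands while the basic Markov traces agree there. This is precisely the paper's proof, so no further comparison is needed.
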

 In particular, if $|S|=1$, the proposition asserts that $P^{d,S,\tau}_L(u,v,\gamma)=P^{\tau}_L(u,v)$, where $P^{\tau}_L(u,v)$ is the invariant of the non-framed link obtained from $\hH_n$.
\begin{proof} Let $\{\xi^{(d)}_1,\dots,\xi^{(d)}_{d}\}$ denote the $d$-th roots of unity.

Let $\mu\mmodels_d n$ be such that $\overline{\mu}=\mu^S$, that is, such that $\mu_a\neq 0$ if and only if $a\in S$. To $\mu$, we associate the composition $\mu'=(\mu_{i_1},\dots,\mu_{i_{d'}})\mmodels_{d'}n$, where $\mu_{i_1},\dots,\mu_{i_{d'}}$ are the non-zero parts of $\mu$ and $i_1<\dots<i_{d'}$. We have $S^{\mu}=S^{\mu'}$ and, in turn, $\hH^{\mu}=\hH^{\mu'}$ and $m_{\mu}=m_{\mu'}$.

Let $\chi\in\Irr(\mathcal{T}_{d,n})$ with $\Comp(\chi)=\mu$. For every $j=1,\dots,n$, by hypothesis on $\mu$ and $\chi$, there exists $a\in\{1,\dots,d'\}$ such that $\chi(t_j)=\xi^{(d)}_{i_a}$. Then we set $\chi'(t_j)=\xi^{(d')}_a$. This defines a bijection between the characters $\chi\in\Irr(\mathcal{T}_{d,n})$ with $\Comp(\chi)=\mu$ and the characters $\chi'\in\Irr(\mathcal{T}_{d',n})$ with $\Comp(\chi')=\mu'$. 
This bijection allows to identify the spaces $\Mat_{m_{\mu}}(\hH^{\mu})$ and $\Mat_{m_{\mu'}}(\hH^{\mu'})$. We have moreover $\pi_{\chi}=\pi_{\chi'}$.

Recall the formulas (\ref{form-X})-(\ref{form-g}) giving the images of the generators $g_1,\dots,g_{n-1},X_1$ under $\Psi_{d,n}$. It is then immediate to see that $\Psi_{d,n}(x)$  and $\Psi_{d',n}(x)$ coincide in the summand $\Mat_{m_{\mu}}(\hH^{\mu})=\Mat_{m_{\mu'}}(\hH^{\mu'})$, for any $x$ in the subalgebra of $\hY_{d,n}$ generated by $g_1,\dots,g_{n-1},X_1$. 
\end{proof}

\begin{remark}\label{rem-compare2} Here we restrict to a classical non-framed link $L$. With the comparison formulas (\ref{compare2}) of the preceding section, a straightforward consequence of Proposition \ref{prop1} is the corresponding result for the Juyumaya--Lambropoulou invariants. Namely, we have 
\[\Phi_{d,D}(q,z)(L)=\Phi_{d',\{1,\dots,d'\}}(q,z)(L)\ \ \ \text{and}\ \ \ \Gamma_{d,D}(q,z)(L)=\Gamma_{d',\{1,\dots,d'\}}(q,z)(L)\,,\] 
where $d'=|D|$. In this case, this was proved in \cite[Proposition 4.6]{ChJKL} by a different approach. In particular if $|D|=1$, we recover the HOMFLYPT polynomial.
\end{remark}

\subsection{Links with a fixed number of connected components}

Let $\mu\mmodels_d n$ and $\lambda\mmodels_kn$ for some $d,k>0$. We will say that $\lambda$ is a \emph{refinement} of $\mu$ if $\{1,\dots,k\}$ can be partitioned into $d$ disjoint subsets (possibly empty): $\{1,\dots,k\}=I_1\sqcup\dots\sqcup I_d$, such that $\mu_a=\sum_{i\in I_a}\lambda_i$ for all $a=1,\dots,d$. As examples, every composition $\lambda\mmodels n$ is a refinement of the composition $(n)$, while the composition $(1,\dots,1)\mmodels n$ is a refinement of every composition $\mu\mmodels n$.

For a permutation $\pi\in S_n$, we denote $\text{cyc}(\pi)$ the collection of lengths of the cycles of $\pi$ and we consider it as a composition of $n$ (the order is not relevant here). Then, it is immediate that a permutation $\pi\in S_n$ is conjugate to an element of $S^{\mu}$ if and only if $\text{cyc}(\pi)$ is a refinement of $\mu$.

For a $\Z/d\Z$-framed affine braid $\beta\in \Z/d\Z\wr \hB_n$, we define its \emph{underlying permutation} $p_{\beta}$ as the image of $\beta$ by the natural group homomorphism from $\Z/d\Z\wr \hB_n$ to $S_n$ (defined by $t_j\mapsto 1$, $\sigma_0\mapsto 1$ and $\sigma_i\mapsto s_i$). Note that $\beta t_j=t_{p_{\beta}(j)}\beta$, for $j=1,\dots,n$.

\begin{proposition}\label{prop2}
Let $\beta\in \Z/d\Z\wr \hB_n$. In $\Psi_{d,n}\bigl(\delta_{d,n}(\beta)\bigr)$, the matrix corresponding to $\mu\mmodels_d n$ has all its diagonal elements equal to $0$ if $\text{cyc}(\pi)$ is not a refinement of $\mu$.
\end{proposition}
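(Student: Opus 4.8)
The strategy is to compute the diagonal block of $\Psi_{d,n}\bigl(\delta_{d,n}(\beta)\bigr)$ indexed by $\mu$ using the explicit formulas for the isomorphism, and to see that a nonzero diagonal contribution forces the underlying permutation $p_\beta$ to be conjugate into the Young subgroup $\mathfrak{S}^\mu$. First I would reduce to tracking how the generators act. Since $\delta_{d,n}$ sends each generator $\sigma_i$ of $\Z/d\Z\wr\hB_n$ to $\bigl(\gamma+(1-\gamma)e_i\bigr)g_i$, sends $\sigma_0$ to $X_1$ and fixes the $t_j$, the element $\delta_{d,n}(\beta)$ is a product of such factors. Now apply $\Psi_{d,n}$. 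By Formula (\ref{form-t}), each $t_j$ becomes a sum of diagonal matrices $\bun_{\chi,\chi}\chi(t_j)$; by Formula (\ref{form-X}), $X_1$ likewise becomes diagonal, $\sum_\chi\bun_{\chi,\chi}\oX_{\pi_\chi^{-1}(1)}$; and by Formula (\ref{form-g}) together with the description of the image of $e_i$ just after (\ref{form-t}), the image of $\bigl(\gamma+(1-\gamma)e_i\bigr)g_i$ decomposes, on each character $\chi$, either as a diagonal entry $\bun_{\chi,\chi}$ times an element of $H^\mu$ (when $s_i(\chi)=\chi$) or as an off-diagonal $u\gamma\,\bun_{\chi,s_i(\chi)}$ (when $s_i(\chi)\neq\chi$); in particular each generator of $\Z/d\Z\wr\hB_n$ maps to a matrix whose nonzero entries sit in positions $(\chi,\chi')$ with $\chi'\in\{\chi,s_i(\chi)\}$.

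Next I would track the "support permutation" of these matrices. Fixing $\mu$ and restricting to the $\Mat_{m_\mu}(\hH^\mu)$-summand, the image of each generator $\sigma_i$ is supported on the positions $(\chi,\chi)$ and $(\chi,s_i(\chi))$ for characters $\chi$ with $\Comp(\chi)=\mu$; that is, its support is contained in the permutation matrix of the transposition $s_i$ acting on $\{\chi : \Comp(\chi)=\mu\}$ (plus the diagonal). The same holds trivially for $t_j$ and $X_1$, which are diagonal. Multiplying these matrices, the support of $\Psi_{d,n}\bigl(\delta_{d,n}(\beta)\bigr)$ inside this summand is contained in the set of pairs $(\chi,\sigma(\chi))$ where $\sigma$ ranges over the permutations obtainable as products of the transpositions $s_i$ corresponding to the generators appearing in $\beta$, with some factors replaced by the identity; the "maximal" such $\sigma$, obtained by keeping every transposition, is precisely the underlying permutation $p_\beta$. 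Consequently the $(\chi,\chi)$-entry of the matrix can be nonzero only if $\chi$ lies on a cycle-closing path, i.e. only if $p_\beta(\chi)=\chi$ for the action of $p_\beta$ on characters — more precisely, only if $p_\beta$ (acting on $\{1,\dots,n\}$) sends $\chi$ to $\chi$ as a character, which by the formula $w(\chi)(t_i)=\chi(t_{w^{-1}(i)})$ means $\chi$ is constant on the orbits of $p_\beta$.

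Finally I would translate "$\chi$ constant on the orbits of $p_\beta$ for some $\chi$ with $\Comp(\chi)=\mu$" into the refinement condition. If such a $\chi$ exists, then each cycle of $p_\beta$ is assigned a single value $\xi_a$, so grouping the cycles by their assigned value gives a partition $\{$cycles$\}=\bigsqcup_{a=1}^d I_a$ with $\mu_a=\sum_{c\in I_a}(\text{length of }c)$; this is exactly the statement that $\text{cyc}(p_\beta)$ is a refinement of $\mu$. Contrapositively, if $\text{cyc}(p_\beta)$ is not a refinement of $\mu$ then no such $\chi$ exists, hence every diagonal entry of the $\mu$-block vanishes. (I would here invoke the remark made just before the proposition, that $\pi\in S_n$ is conjugate into $\mathfrak{S}^\mu$ iff $\text{cyc}(\pi)$ refines $\mu$, to connect this with the conjugacy formulation.) The main obstacle is the bookkeeping in the middle step: one must argue carefully that, even though each generator's image contains both a diagonal and an off-diagonal part, any product yielding a nonzero $(\chi,\chi)$-entry must return $\chi$ to itself under the net character-permutation, and that this net permutation — whichever subset of the transpositions $s_i$ actually contributed off-diagonally — still fixes $\chi$ as a character exactly when $p_\beta$ does, because a transposition swapping two letters carrying the same $\xi$-value acts trivially on $\chi$ anyway. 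Handling this cleanly (e.g. by noting $e_i$'s image kills the summand $\chi$ precisely when $\chi(t_i)\neq\chi(t_{i+1})$, so the off-diagonal part only survives exactly when it genuinely moves $\chi$) is the crux; once it is in place the refinement conclusion is immediate.
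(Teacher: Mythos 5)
Your argument is correct, but it takes a genuinely different (and more laborious) route than the paper. The paper's proof bypasses all generator-by-generator bookkeeping: writing $x=\delta_{d,n}(\beta)$ and using that $\delta_{d,n}$ is a morphism fixing the $t_j$, the framed-braid relation $\beta t_j=t_{p_\beta(j)}\beta$ gives $xt_j=t_{p_\beta(j)}x$, hence $xE_\chi=E_{p_\beta(\chi)}x$; the diagonal coefficient in position $\chi$ of the $\mu$-block is $\Psi_{d,n}(E_\chi xE_\chi)=\Psi_{d,n}(E_\chi E_{p_\beta(\chi)}x)$, which vanishes unless $p_\beta(\chi)=\chi$, and $p_\beta(\chi)=\chi$ is equivalent to $\pi_\chi^{-1}p_\beta\pi_\chi\in\mathfrak{S}^\mu$, impossible when $\text{cyc}(p_\beta)$ does not refine $\mu$. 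You instead push the explicit formulas (\ref{form-t})--(\ref{form-g}) through the product of generator images and track matrix supports; this works, but only because of the point you correctly single out as the crux: each row $\chi$ of the image of $\bigl(\gamma+(1-\gamma)e_i\bigr)g_i$ has a \emph{single} nonzero entry, sitting in column $s_i(\chi)$ (diagonal exactly when $s_i(\chi)=\chi$), so the column index of row $\chi$ in the product is forced to be $\chi$ acted on by the full product of the $s_i$'s, i.e.\ by $p_\beta^{\pm1}$, independently of which factors happened to act diagonally. To make this airtight you should also record that the inverse generators have the same one-entry-per-row support (from $g_i^{-1}=u^{-2}g_i-u^{-2}v\,e_i$ and $E_\chi e_i=0$ when $s_i(\chi)\neq\chi$), since $\beta$ is a word in generators \emph{and} inverses, and fix the left/right action convention (the column is $p_\beta^{-1}(\chi)$ or $p_\beta(\chi)$ depending on it, which is harmless since fixed points and cycle types agree). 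Your final translation of ``$\chi$ constant on the orbits of $p_\beta$'' into the refinement condition is equivalent to the paper's conjugacy formulation. In short: the paper's argument is shorter, word-independent and immune to the bookkeeping pitfalls; yours exposes the stronger structural fact that $\Psi_{d,n}\bigl(\delta_{d,n}(\beta)\bigr)$ is a block generalized permutation matrix for the action of $p_\beta$ on characters, which is more information than the proposition needs.
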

\begin{proof}
Let $x:=\delta_{d,n}(\beta)\in\hY_{d,n}$. As $\delta_{d,n}$ is a group homomorphism, we have $xt_j=t_{p_{\beta}(j)}x$.
In $\Psi_{d,n}(x)$, in the matrix corresponding to $\mu\mmodels_d n$, the coefficient on the diagonal in position $\chi$ is $\Psi_{d,n}(E_{\chi}xE_{\chi})$. And we have $E_{\chi}xE_{\chi}=E_{\chi}E_{p_{\beta}(\chi)}x$. This is equal to 0 if $p_{\beta}(\chi)\neq\chi$. Now $p_{\beta}(\chi)=\chi$ if and only if $\pi_{\chi}^{-1}p_{\beta}\pi_{\chi}\in S^{\mu}$, and this is impossible if $\text{cyc}(p_{\beta})$ is not a refinement of $\mu$. 
\end{proof}

Now we will combine this general result on the isomorphism $\Psi_{d,n}$ with two elementary facts. First, if a composition $\mu$ has strictly more non-zero parts than another composition $\lambda$, then $\lambda$ can not be a refinement of $\mu$. Second, the closure of a ($\Z/d\Z$-framed, affine) braid $\beta$ is a ($\Z/d\Z$-framed) link with $N$ connected components if and only if $\text{cyc}(p_{\beta})$ has exactly $N$ non-zero parts. Thus we have obtained the following result.

\begin{corollary}\label{cor-fin1}
Let $L$ be a $\Z/d \Z$-framed link with $N$ connected components. We have $P^{d,S,\btau}_L=0$ if $|S|>N$.
\end{corollary}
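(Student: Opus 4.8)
\textbf{Proof proposal for Corollary \ref{cor-fin1}.}

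The plan is to deduce this directly from Proposition \ref{prop2} together with the description of the basic Markov traces $\rho^{S,\btau}_{d,n}$ given after Theorem \ref{theo-mark}. First I would recall, following the Example after the definition of basic Markov traces, that $\rho^{S,\btau}_{d,n}$ is (via $\Psi_{d,n}$) supported only on those summands $\Mat_{m_{\mu}}(\hH^{\mu})$ with $\overline{\mu}=\mu^S$; in particular every such $\mu$ has exactly $|S|$ non-zero parts. Moreover $\rho^{S,\btau}_{d,n}$ is built by first applying the matrix trace $\operatorname{Tr}_{\Mat_{m_{\mu}}}$, so the value on a given summand depends only on the \emph{diagonal} entries of the corresponding matrix. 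Hence, to evaluate $P^{d,S,\btau}_L=\rho^{S,\btau}_{d,n}\bigl(\delta_{d,n}(\beta_L)\bigr)$, only the diagonal entries of the matrices indexed by compositions $\mu$ with $\overline{\mu}=\mu^S$ matter.

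Next I would invoke Proposition \ref{prop2}: for $\beta=\beta_L$ a $\Z/d\Z$-framed affine braid closing to $L$, and for any $\mu\mmodels_d n$, the matrix corresponding to $\mu$ in $\Psi_{d,n}(\delta_{d,n}(\beta))$ has all diagonal entries equal to $0$ whenever $\operatorname{cyc}(p_\beta)$ is not a refinement of $\mu$. Now I would combine this with the two elementary observations quoted just before the corollary. Since $L$ has $N$ connected components, $\operatorname{cyc}(p_{\beta_L})$ has exactly $N$ non-zero parts. If $|S|>N$, then every $\mu$ with $\overline{\mu}=\mu^S$ has $|S|>N$ non-zero parts, which is strictly more than the number of non-zero parts of $\operatorname{cyc}(p_{\beta_L})$; hence $\operatorname{cyc}(p_{\beta_L})$ cannot be a refinement of such a $\mu$. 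By Proposition \ref{prop2}, the matrix corresponding to each such $\mu$ has vanishing diagonal.

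Putting these together: the only summands on which $\rho^{S,\btau}_{d,n}$ is nonzero are those with $\overline{\mu}=\mu^S$, and on each of these the matrix $\Psi_{d,n}(\delta_{d,n}(\beta_L))$ has zero diagonal, so its contribution after $\operatorname{Tr}_{\Mat_{m_\mu}}$ is $0$. Therefore $\rho^{S,\btau}_{d,n}\bigl(\delta_{d,n}(\beta_L)\bigr)=0$, i.e.\ $P^{d,S,\btau}_L=0$. There is no real obstacle here; the one point to state carefully is that $\rho^{S,\btau}_{d,n}$ factors through the diagonal entries of the relevant summands (via the matrix trace), so that Proposition \ref{prop2} — which only controls diagonal entries — suffices. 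One should also note the invariance of $P^{d,S,\btau}_L$ under the choice of braid $\beta_L$ is already guaranteed by $\rho^{S,\btau}_{d,n}$ being a Markov trace, so evaluating on any single closing braid is legitimate.
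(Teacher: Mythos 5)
Your proof is correct and is essentially the paper's own argument: the paper obtains the corollary by combining Proposition \ref{prop2} with the same two elementary facts (a composition with more non-zero parts admits no refinement by one with fewer, and the closure of $\beta$ has $N$ components iff $\operatorname{cyc}(p_\beta)$ has $N$ non-zero parts), together with the observation that $\rho^{S,\btau}_{d,n}$ is supported on the summands with $\overline{\mu}=\mu^S$ and factors through the matrix trace, hence only sees diagonal entries. Your write-up just makes these supporting points more explicit than the paper does.
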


Finally, combining this corollary with Proposition \ref{prop1} for a non-framed link $L$, we conclude our study by determining which basic invariants it is enough to consider given the number of connected components of $L$. 
\begin{corollary}\label{cor-fin2}
Let $L$ be a non-framed link with $N$ connected components. Every invariant for $L$ obtained here from (affine) Yokonuma--Hecke algebras is a combination of the following basic invariants:
\begin{equation}\label{list-inv}
P^{1,\{1\},\btau}_L\,,\ P^{2,\{1,2\},\btau}_L\,,\ \dots\ ,\ P^{N,\{1,\dots,N\},\btau}_L\ .
\end{equation}
In particular, for a non-framed knot ($N=1$), it is enough to consider the algebras $\hH_n=\hY_{1,n}$.
\end{corollary}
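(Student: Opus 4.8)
The plan is to combine the two main properties already established — Proposition \ref{prop1} (reduction of $S$ to a full set $\{1,\dots,d'\}$ with $d'=|S|$) and Corollary \ref{cor-fin1} (vanishing of $P^{d,S,\btau}_L$ when $|S|>N$) — with the structural description of all Markov traces given after Theorem \ref{theo-mark}. First I would recall that, by the discussion of basic Markov traces following Theorem \ref{theo-mark}, every Markov trace on $\{\hY_{d,n}\}_{n\geq1}$ of the form (\ref{rho-n}) is a linear combination of basic Markov traces $\{\rho_{d,n}^{S,\btau}\}_{n\geq1}$, where $S$ runs over the non-empty subsets of $\{1,\dots,d\}$ and $\btau$ over choices of $|S|$ Markov traces on $\{\hH_n\}_{n\geq1}$. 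Consequently, any invariant $P^{\rho_d}_L$ obtained here is, evaluated on $L$, a linear combination of the basic invariants $P^{d,S,\btau}_L$.

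Next I would apply the two reductions in turn. Given a non-framed link $L$ with $N$ connected components, fix any $d>0$ and any basic invariant $P^{d,S,\btau}_L$ appearing in such a combination. By Corollary \ref{cor-fin1}, this invariant is identically zero unless $|S|\leq N$, so only subsets $S$ with $|S|\leq N$ contribute. For those, Proposition \ref{prop1} gives $P^{d,S,\btau}_L = P^{d',\{1,\dots,d'\},\btau}_L$ where $d'=|S|\leq N$; note that the choice $\btau$ transports unchanged, since it is already a choice of $d'=|S|$ Markov traces on $\{\hH_n\}_{n\geq1}$. Hence every nonzero contribution equals one of $P^{1,\{1\},\btau}_L,\ P^{2,\{1,2\},\btau}_L,\ \dots,\ P^{N,\{1,\dots,N\},\btau}_L$ for a suitable $\btau$, which proves that any invariant for $L$ is a combination of the invariants in (\ref{list-inv}). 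The final sentence, about knots, is the special case $N=1$: only $S=\{1\}$ survives, and $P^{1,\{1\},\btau}_L=P^{\tau}_L(u,v)$ is the invariant coming from $\hH_n=\hY_{1,n}$ by the remark after Proposition \ref{prop1}.

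Since both inputs are already proved in the excerpt, there is no serious obstacle here; the only care needed is bookkeeping. The point I would be most careful about is the exact meaning of ``combination'': as $d$ varies, a general invariant built from $\hY_{d,n}$ is a linear combination (over the ground ring $R$) of the $P^{d,S,\btau}_L$, and one must observe that after the two reductions the labels collapse precisely onto the list (\ref{list-inv}) with $\btau$ ranging over all finite tuples of Markov traces on $\{\hH_n\}_{n\geq1}$, independently of the original $d$. I would also remark that the statement is uniform in the type of link — it holds verbatim for classical and for solid torus non-framed links — because both Proposition \ref{prop1} and Corollary \ref{cor-fin1} were established in that generality. No new computation is required beyond assembling these facts.
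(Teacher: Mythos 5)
Your proposal is correct and follows exactly the paper's own route: the corollary is obtained by combining Corollary \ref{cor-fin1} (vanishing of $P^{d,S,\btau}_L$ for $|S|>N$) with Proposition \ref{prop1} (reduction to $S=\{1,\dots,d'\}$, $d'=|S|$), together with the fact that every Markov trace constructed here is a combination of basic ones. Your extra bookkeeping remarks (transport of $\btau$, uniformity over classical and solid torus links) are consistent with, and implicit in, the paper's argument.
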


For a classical non-framed link, we can forget the parameters $\btau$, and it is enough to calculate $N$ distinct invariants, the first one being the HOMFLYPT polynomial.

\begin{remark}\label{rem-compare3}
Here we restrict to a classical $\Z/d \Z$-framed link $L$ with $N$ connected components. In this particular case, we give the translation of Corollary \ref{cor-fin1} in terms of Juyumaya--Lambropoulou invariants, following Formula \ref{compare2} (we write the formulas for the invariants $\Phi_{d,D}$; the same formulas hold for $\Gamma_{d,D}$). A straightforward analysis leads to
\[\Phi_{d,D}(q,z)(L)=\frac{N}{|D|}\sum_{D'\subset D,\ |D'|=N}\Phi_{d,D'}(q,\frac{|D|}{N}z)(L)\,,\ \ \ \ \ \ \ \text{if $|D|>N$\,.}\]
Note that the rescaling of the variable $z$ comes from the fact that the expressions relating $(u,v)$ with $(q,z)$ depend on $|D|$.

If moreover $L$ is a classical non-framed link with $N$ connected components, with Remark \ref{rem-compare2}, it is enough to consider $D=\{1,\dots,d\}$, and we obtain
\begin{equation}\label{form-Phi}
\Phi_{d,\{1,\dots,d\}}(q,z)(L)=\frac{N}{d}\left(\begin{array}{c}d\\ N\end{array}\right)\Phi_{N,\{1,\dots,N\}}(q,\frac{d}{N}z)(L)\,,\ \ \ \ \ \ \ \text{if $d>N$\,.}
\end{equation}
This formula is the generalisation, for $N>1$, of \cite[Theorem 5.8]{ChJKL}. As a consequence, the analogue of Corollary \ref{cor-fin2} holds as well for these invariants: it is enough to consider $\Phi_{1,\{1\}}$, $\dots$, $\Phi_{N,\{1,\dots,N\}}$. This generalises \cite[Theorem 7.1]{ChJKL} for $N>2$, obtained by a different method.
\end{remark}

\end{document}